\newcommand{\ndN}{\mathbb{N}}
\newcommand{\ndR}{\mathbb{R}}
\newcommand{\ndK}{\mathbb{K}}
\renewcommand{\Pr}[1]{\mathbb{P}\left(#1\right)}
\newcommand{\Ex}[1]{\mathbb{E}[#1]}
\newcommand{\cA}{\mathcal{A}}
\newcommand{\cC}{\mathcal{C}}
\newcommand{\cD}{\mathcal{D}}
\newcommand{\cE}{\mathcal{E}}
\newcommand{\cS}{\mathcal{S}}
\newcommand{\cT}{\mathcal{T}}
\newcommand{\cX}{\mathcal{X}}
\newcommand{\mA}{\mathsf{A}}
\newcommand{\me}{\mathsf{e}}
\newcommand{\CRT}{\mathcal{T}_\me}
\newcommand{\He}{\textsc{H}}
\newcommand{\Di}{\textsc{D}}
\newcommand{\Wi}{\textsc{W}}
\newcommand{\eqdist}{\,{\buildrel (d) \over =}\,}
\newcommand{\convdis}{\,{\buildrel (d) \over \longrightarrow}\,}
\newcommand{\disto}{\mathsf{dis}}
\newcommand{\MSet}{\textsc{MSET}}
\newtheorem{theorem}{Theorem}[section]
\newtheorem{proposition}[theorem]{Proposition}
\newtheorem{lemma}[theorem]{Lemma}
\newtheorem{definition}[theorem]{Definition}
\numberwithin{equation}{section}
\title{Scaling Limits of Random P\'{o}lya Trees}
\author{Konstantinos Panagiotou}
\address[Konstantinos Panagiotou]{Institute of Mathematics, Ludwig-Maximilians-Universit\"at, Theresienstr.\ 39, 80333 Munich, Germany}
\email{kpanagio@math.lmu.de}
\author{Benedikt Stufler}
\address[Benedikt Stufler]{Institute of Mathematics, Ludwig-Maximilians-Universit\"at, Theresienstr.\ 39, 80333 Munich, Germany}
\email{stufler@math.lmu.de}
\date{\today}
\begin{document}

\begin{abstract}
P\'olya trees are rooted  trees considered up to symmetry. We establish the convergence of large uniform random P\'olya trees with arbitrary degree restrictions to Aldous' Continuum Random Tree with respect to the Gromov-Hausdorff metric. Our proof is short and elementary, and it shows that the global shape of a  random P\'olya tree is essentially dictated by a large Galton-Watson tree that it contains. We also derive sub-Gaussian tail bounds for both the height and the width, which  are optimal up to constant factors in the exponent. 
\end{abstract}

\maketitle

\section{Introduction and main results}

Any connected graph $G$ with vertex set $V$ can be associated in a natural way with a metric space $(V, d_G)$, where $d_G(u,v)$ is defined as the length of a shortest path that contains $u$ and $v$ in $G$. In this paper we consider the setting where $G$ is a random tree with $n$ vertices, and we study, as $n \to \infty$, several properties of the associated random metric space.

The most prominent and well-studied case that fits in our setting is when $G$ is a critical Galton-Watson random tree with $n$ vertices, where the offspring distribution has a finite non-zero variance. In the series \cite{MR1085326, MR1166406, MR1207226} of seminal papers Aldous proved that the metric spaces associated to those trees admit a common and universal limit, the so-called Continuum Random Tree (CRT). Since then, the CRT has been shown to be the limit of various  families of random combinatorial structures, in particular other distributions on trees, see e.g.\ Haas and Miermont \cite{MR3050512} and references therein, planar maps, see e.g.\ Albenque and Marckert \cite{MR2438817}, Bettinelli \cite{MR3335010}, Caraceni \cite{Ca}, Curien, Haas and Kortchemski \cite{RSA20554}, Janson and Stefansson \cite{MR3342658}, Stufler \cite{2015arXiv150507600S}, and certain families of graphs, see Panagiotou, Stufler, and Weller \cite{2014arXiv1411.1865P}.

Here we study the class of
\emph{P\'olya trees}, which are rooted trees (that is, there is a distinguished vertex called the root) considered up to symmetry, equipped with the uniform distribution. They are named after George P\'olya, who developed a framework based on generating functions in order to study their properties~\cite{MR1577579}. The study of these objects, especially in random settings, poses significant difficulties: the presence of non-trivial symmetries makes it difficult to derive an explicit and handy description of the probability space at hand. To wit, random P\'olya trees do not fit into well-studied models of random trees such as Galton-Watson trees, a fact that was widely believed and which was established rigorously  by Drmota and Gittenberger \cite{MR2718280}.

The main contribution of this paper is a simple and short proof that establishes  the scaling limit of random P\'olya trees, with the additional benefit that it allows us to consider \emph{arbitrary} degree restrictions. That is, we may restrict the outdegrees of the vertices to an arbitrary set (always including, of course, 0 and an interger $\ge 2$, so that the trees are finite and non-trivial). Our proof also reveals a novel striking structural property that is of independent interest and that rectifies the common perception of random unlabelled trees. As already mentioned, it is known that random P\'olya trees do not admit a simple probabilistic description. However, we argue that this barely fails to be the case, namely that a random P\'olya tree ``consists'' in a well-defined sense of a large Galton-Watson tree having a random size to which small forests are attached -- in other words, the global structure of a large random P\'olya tree is similar to the structure of the Galton-Watson tree it contains.
\begin{theorem}
\label{te:main}
Let~$\Omega$ be an aribtrary set of nonnegative integers containing zero and at least one integer greater than or equal to two. Let~$\mA_n$ denote the uniform random P\'olya tree with~$n$ vertices and vertex outdegrees in~$\Omega$.  Then there exists a constant~$c_\Omega>0$ such that the metric space
	$(\mA_n, c_\Omega n^{-1/2} d_{\mA_n})$ 
converges towards the continuum random tree	$(\CRT, d_{\CRT})$ in the Gromov-Hausdorff sense as $n \equiv 1 \mod \gcd(\Omega)$ tends to infinity. 
\end{theorem}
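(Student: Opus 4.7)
My plan is to identify within $\mA_n$ a large conditioned Galton--Watson (GW) subtree whose offspring distribution has finite nonzero variance, to show that the remaining pieces attached to this ``skeleton'' contribute negligibly at scale $n^{-1/2}$, and then to deduce the CRT limit from Aldous' classical theorem for conditioned GW trees. The three ingredients are: (i) a combinatorial decomposition of the $\MSET_\Omega$ construction into an ``asymmetric'' core and a ``symmetric'' correction; (ii) the identification of a critical offspring law $\xi$ on $\Omega$ of finite variance coming from (i); (iii) an exponential-tail bound on the sizes of the decorations attached along the skeleton.

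\textbf{Decomposition and GW distribution.} A P\'olya tree with outdegrees in $\Omega$ satisfies $\mathcal{A} = \mathcal{Z}\cdot \MSET_\Omega(\mathcal{A})$. The cycle-index identity
\[
\MSET(f)(z) \;=\; \exp\Bigl(\textstyle\sum_{k\ge 1} f(z^k)/k\Bigr)
\]
singles out the $k=1$ contribution $\exp(f(z))$, which corresponds combinatorially to children that are pairwise non-isomorphic and behaves like a $\Set_\Omega$-construction. I would show that, at the dominant Pringsheim singularity $\rho$ of the generating function $A(z)$, substituting $A(\rho)$ into the coefficients $[y^d]$ of this linear part (restricted to $d\in\Omega$ and normalised) produces a probability distribution $\xi$ on $\Omega$ with $\Ex{\xi}=1$ and $\sigma^2:=\Va{\xi}\in(0,\infty)$; both properties follow from the standard P\'olya--Otter square-root singularity analysis. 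The crucial analytic fact is that the symmetric corrections (the $k\ge 2$ terms, involving $f(z^k)$ with $k\ge 2$) have radius of convergence strictly larger than $\rho$, which is ultimately what makes the ``decorations'' exponentially small.

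\textbf{Coupling and Aldous' theorem.} A Boltzmann-sampler viewpoint then yields a two-stage description of $\mA_n$: first sample a $\xi$-GW tree $T$, and at each vertex $v$ of $T$ independently attach a random ``decoration'' drawn from the symmetric corrections at parameter $A(\rho)$, the whole sample being conditioned on the total vertex count being $n$. A standard size-biased argument identifies the conditional law of $T$ as a $\xi$-GW tree conditioned on size $N_n$, where $N_n/n \convp \alpha$ for some explicit $\alpha\in(0,1]$. Aldous' theorem applied to $T$ conditioned on $|T|=N_n$ gives Gromov--Hausdorff convergence of $(T, (\sigma\sqrt{\alpha}/2)\, n^{-1/2}d_T)$ towards $\CRT$. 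On the decoration side, the exponential tail of the corrections together with a union bound over the $\Theta(n)$ skeleton vertices ensures that all attached packs simultaneously have diameter $O(\log n)=o(\sqrt{n})$ with probability $1-o(1)$; this bounds the Gromov--Hausdorff distance between $\mA_n$ and its skeleton and finishes the argument with $c_\Omega = \sigma\sqrt{\alpha}/2$.

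\textbf{Main obstacle.} The delicate step will be making the coupling rigorous, i.e.\ constructing a joint realisation of $\mA_n$ and the conditioned GW tree in which the decorations are \emph{genuinely} conditionally independent given the skeleton, and in which the forgetful map is uniform on the correct arithmetic progression of sizes. The latter requires a local limit theorem for the integer random walk that counts total vertices in the two-stage sampler, so as to handle the periodicity condition $n\equiv 1 \pmod{\gcd(\Omega)}$ and to conclude $N_n/n\convp\alpha$. Extracting the right normalising constant $c_\Omega$, and hence checking that $\sigma$ and $\alpha$ are both strictly positive and finite for \emph{arbitrary} admissible $\Omega$, reduces to verifying the square-root Pringsheim nature of~$A$ under the degree restriction; this is the analytic heart of the proof.
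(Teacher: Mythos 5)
Your proposal follows essentially the same route as the paper's proof: the fixpoint ($k=1$) part of the cycle-index/Boltzmann-sampler decomposition yields a critical Galton--Watson skeleton whose offspring law has exponential moments (because $\rho_\Omega<1$), the $k\ge 2$ terms yield decorations with exponential tails that after conditioning (transferred via $\Pr{|\Gamma \cA_\Omega(\rho_\Omega)|=n}=\Theta(n^{-3/2})$, i.e.\ the local-limit-type input you flag) are uniformly $O(\log n)$, the skeleton size concentrates around $\alpha n$, and Aldous' theorem plus the negligible Gromov--Hausdorff distance to the skeleton finishes the argument. Two minor slips that do not affect the existence of $c_\Omega>0$: the skeleton offspring law is the number of fixpoints of the sampled permutation, hence not supported on $\Omega$ in general, and since the skeleton has only about $\alpha n$ vertices the correct normalisation is $c_\Omega=\sigma/(2\sqrt{\alpha})=\sigma\sqrt{1+\Ex{\zeta}}/2$, not $\sigma\sqrt{\alpha}/2$.
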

In the theorem we use the normalization of Le Gall \cite{MR2225746} and let $\CRT$ denote the continuum random tree constructed from Brownian excursion, see Section~\ref{sec:ald} for the appropriate definitions. We also obtain explicit expressions for the scaling constant $c_\Omega$ in our proof, see~\eqref{SePoeq:cOmega} and the subsequent equations.

Random P\'olya trees were studied in several papers prior to this work. In particular, since the construction of the CRT in the early 90's it was a long-standing conjecture \cite[p.\ 55]{MR1166406} that this model of random trees (without any degree restrictions) also allows the same scaling limit. The convergence of binary P\'olya trees, that is, when the vertex outdegrees are restricted to the set $\{0,2\}$, was established by Marckert and Miermont \cite{MR2829313} using an appropriate trimming procedure. Later, in \cite{MR3050512} the conjecture was proven by using different techniques; actually, a far more general result on the scaling limit of random trees satisfying a certain Markov branching property was shown. Among other results, the method in~\cite{MR3050512} allows also to study P\'olya trees with some degree restrictions, where the vertex outdegrees have to be constrained in a set of the form $\{0, 1, \dots, d\}$ or $\{0, d\}$ for $d \ge 2$. However, the question about the convergence of P\'olya trees with \emph{arbitrary} degree restrictions was open, and we answer it in this work with a simple argument.

Our next result is concerned with two extremal parameters. The  height $\He(T)$ of a rooted tree $T$ is defined as the maximal distance of a vertex from the root, and the width $\Wi(T)$ is the maximal number of vertices at any fixed distance from the root. 
\begin{theorem}
\label{te:main2}
Let~$\Omega$ be an aribtrary set of nonnegative integers containing zero and at least one integer greater than or equal to two. Let~$\mA_n$ denote the uniform random P\'olya tree with~$n$ vertices and vertex outdegrees in~$\Omega$. Then there are constants $C,c>0$ such that
\[
\Pr{\He(\mA_n) \ge x} \le C \exp(-cx^2/n), \quad \Pr{\Wi(\mA_n) \ge x} \le C \exp(-cx^2/n)
\]
for all $x \ge 0$ and $n \equiv 1 \mod \gcd(\Omega)$.
\end{theorem}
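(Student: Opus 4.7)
The plan is to transfer the tail bounds to a critical Galton--Watson tree, for which the sub-Gaussian estimates of Addario-Berry, Devroye and Janson are available, via the structural decomposition that drives the proof of Theorem~\ref{te:main}. Specifically, $\mA_n$ is coupled with a critical Galton--Watson tree $T_N$ whose offspring law has finite exponential moments, and each of whose vertices $v$ carries an independent small rooted decoration $D_v$ such that $|D_v|$, and hence $\He(D_v)$ and $\Wi(D_v)$, has uniform exponential tails. Since $N\le n$ deterministically, the ADJ bounds give
\[
\Pr{\He(T_N)\ge y}\le C e^{-cy^2/n},\qquad \Pr{\Wi(T_N)\ge y}\le Ce^{-cy^2/n},\qquad y\ge 0.
\]

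For the height I would use the pathwise bound
\[
\He(\mA_n)\le \He(T_N)+\max_\pi\sum_{v\in\pi}\He(D_v),
\]
where $\pi$ ranges over root-to-vertex paths in $T_N$, obtained by tracking a deepest root-leaf path in $\mA_n$ through the succession of decorations it traverses. Conditionally on $T_N$ and on a fixed path of length $h$, the $\He(D_v)$ along the path are independent with exponential tails, so Bernstein's inequality yields
\[
\Pr{\sum_{v\in\pi}\He(D_v)\ge \mu(h+1)+t\;\Big|\;T_N}\le C_1\exp\bigl(-c_1\min(t^2/(h+1),\,t)\bigr),
\]
with $\mu:=\Ex{\He(D_v)}$. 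A union bound over the at most $n$ root-leaf paths adds a polynomial factor, absorbed by adjusting constants once $t\gtrsim \log n$. Splitting $\{\He(\mA_n)\ge x\}$ according to whether $\He(T_N)\ge x/(4(\mu+1))$, the first event is covered by the ADJ bound, while in the complementary regime the required surplus $t\ge x/2$ dominates $h+1$ and hence lies in the linear tail of Bernstein's inequality, giving $C e^{-cx}\le C e^{-cx^2/n}$ for $x\le n$; the range $x>n$ is trivial since $\He(\mA_n)<n$.

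For the width a layer-by-layer bound takes the place of the path bound:
\[
\Wi(\mA_n)\le \max_{h\ge 0}\sum_{v\in T_N}\mathbf{1}[\ell(v)\le h\le \ell(v)+\He(D_v)]\,\Wi(D_v),
\]
with $\ell(v)$ the level of $v$ in $T_N$. The exponential tails on $|D_v|$ give $\Ex{\Wi(D_v)\mathbf{1}[\He(D_v)\ge k]}\le Ce^{-ck}$, so conditionally on $T_N$ the expected contribution at height $h$ of $\mA_n$ from $T_N$-vertices at level $\ell$ decays exponentially in $h-\ell$, whence the total conditional expectation at any single $h$ is $O(\Wi(T_N))$. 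A Bernstein estimate, together with a union bound over the $\le n$ relevant levels, controls the maximum over $h$, and the same regime split as for the height concludes.

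The main obstacle is preventing a spurious $\log n$ factor in the exponent when moving from $T_N$ to $\mA_n$. A naive multiplicative bound $\He(\mA_n)\le \He(T_N)(1+\max_v\He(D_v))$ would fail because $\max_v\He(D_v)=\Theta(\log n)$ typically. The remedy is the additive path- and layer-decompositions above, coupled with Bernstein estimates that depend on the \emph{average} decoration size along the relevant structure rather than on a worst-case maximum.
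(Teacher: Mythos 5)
Your overall strategy is the paper's: split $\mA_n$ into the embedded conditioned Galton--Watson tree plus small attached pieces with exponential size tails, use the Addario-Berry--Devroye--Janson bounds (Theorem~\ref{te:theoconv}) for the Galton--Watson part, concentration plus union bounds for the attachments, and the regime split $\sqrt{n}\le x\le n$ to turn $n^{O(1)}e^{-cx}$ into $Ce^{-cx^2/n}$. However, there is a genuine gap at the point where you apply Bernstein's inequality ``conditionally on $T_N$'': in $\mA_n$, i.e.\ in the Boltzmann sampler conditioned on total size $n$, the decorations $D_v$ are \emph{not} independent given the colored tree $\cT_n$ --- their sizes are constrained to sum to $n-|\cT_n|$ and their individual laws are tilted by this conditioning --- so both the path-wise Bernstein estimate for the height and the layer-wise one for the width are applied to dependent variables and are unjustified as stated. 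The missing device is the transfer to the \emph{unconditioned} sampler $\Gamma\cA_{\Omega}(\rho_{\Omega})$, where, along the exploration of the blue tree, the forests are genuinely i.i.d.\ copies of $\zeta$ with exponential tails; this transfer costs only a factor $O(n^{3/2})$ because $\Pr{|\Gamma \cA_{\Omega}(\rho_{\Omega})| = n}=\Theta(n^{-3/2})$ by Proposition~\ref{pro:enumeration}, see \eqref{SePoeq:poly}. The resulting polynomial prefactor is then absorbed by exactly the regime split you already use, so the gap is fixable, but this tilting step is the load-bearing idea of the paper's proof and is absent from your argument.

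Two smaller points. For the height, the additive path-sum and Bernstein machinery is unnecessary: in the decomposition the forests $F_n(v)$ are pendant (no blue vertex lies below them), so a root-to-vertex path in $\mA_n$ enters at most one decoration, giving $\He(\mA_n)\le \He(\cT_n)+\max_{v\in\cT_n}|F_n(v)|$; after the tilt, a union bound over at most $n$ vertices and the exponential tail of $\zeta$ already yield $n^{5/2}\Pr{\zeta\ge x/2}\le Ce^{-c''x}$, which is all the paper needs. For the width, your layer bound omits the $T_N$-vertices at level $h$ themselves (the paper's quantity $U_d$ includes the term $W_d$); this is harmless but should be added, and your Bernstein step there plays the role of the paper's aggregated variable $\tilde\zeta$ together with Lemma~\ref{le:deviation}. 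Your treatment of the Galton--Watson part, using $|\cT_n|\le n$ inside the exponent, coincides with the paper's.
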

Similar bounds were obtained by Addario-Berry, Devroye and Janson \cite{MR3077536} for critical Galton-Watson trees with finite nonzero variance, conditioned to be large. Our proofs show that these bounds are (up to the choice of $c,C$) best possible. As a direct consequence of our results we obtain for the distribution of $\He$ that
\[
c_{\Omega} n^{-1/2} \He(\mA_n) \convdis \He(\CRT)
\qquad
\textrm{and}
\qquad
\Ex{\He(\mA_n)^p} \sim c_{\Omega}^{-p} n^{p/2} \Ex{\He(\CRT)^p}
\]
for all $p \ge 1$. 
The distribution of the height $\He(\CRT)$ is known and given by
\begin{align}
\label{eq:height}
\He(\CRT) \eqdist \sup_{0 \le t_1 \le t_2 \le 1} \me(t),
\end{align}
where $\me(t)$ is a Brownian excursion of duration one, and
\begin{align}
\label{eq:height1}
\Pr{\He(\CRT) > x} =  2 \sum_{k \ge 1} (4k^2x^2-1)\exp(-2k^2x^2),
\end{align}
see \cite[Ch.\ 3.1]{MR1166406}. Its moments are also known and given by
\begin{align*}
\Ex{\He(\CRT)} &= \sqrt{\pi / 2}, \quad\,\,\,\,  \Ex{\He(\CRT)^p} = 2^{-p/2} p(p-1) \Gamma(p/2) \zeta(p) \quad \text{for $p \ge 2$}.
\end{align*}
This follows from standard results for Brownian excursion by Chung \cite{MR0467948}, or by results of R\'enyi and Szekeres \cite[Eq.\ (4.5)]{MR0219440}) who calculated the moments of the limit distribution of the height of a class of trees that converges towards the CRT; see also~\cite{2014arXiv1411.1865P}.

\subsection*{Methods} Our method relies on generating random P\'olya trees using the framework of Boltzmann samplers \cite{MR2498128, MR2810913}. Our main insight is that this allows us to show that with high probability, that is, with probability tending to one as~$n\to\infty$, the  shape of the P\'olya tree~$\mA_n$ is given by a subtree~$\cT_n$ with small subtrees that contain~$O(\log n )$ vertices attached to each vertex. As a metric space, we argue that~$\cT_n$ is distributed like a critical Galton-Watson tree, whose offspring distribution even has finite exponential moments, conditioned on having a randomly drawn size concentrating around~$n$ times a constant. In particular, by using Aldous's fundamental result~\cite{MR1207226}, see also~\cite{MR2603061,MR3025391}, we obtain that~$\cT_n$ converges to a multiple of the CRT; moreover, the Gromov-Hausdorff distance of~$(\mA_n, n^{-1/2}d_{\mA_n})$ and~$(\cT_n, n^{-1/2}d_{\cT_n})$ converges in probability to zero, yielding the desired result. To prove Theorem~\ref{te:main2} we then use tail-bounds for $\He(\cT_n)$ in order to obtain the corresponding bounds for the height of $\mA_n$.

\subsection*{Outline} The paper is structured as follows. In the next section we recall Aldous' theorem regarding the convergence of Galton-Watson trees, and we introduce some notation that will be used throughout the paper. Since this paper is targeted to a probabilistic audience, we introduce in Section~\ref{sec:ccgs} all required combinatorial preliminaries, tailored to our specific aims. In Section~\ref{sec:poltree} we give a formal definition of P\'olya trees and derive the sampling algorithm that will be the basis of our analysis. Finally, in Section~\ref{sec:pfmain}, which is the main novel contribution of this paper, we present the proofs of our main theorems.

\section{Aldous' fundamental theorem}  \label{sec:ald}
\subsection{Gromov-Hausdorff convergence}
The exposition here is based on \cite[Ch.\ 7]{MR1835418} and \cite{MR3025391}. A {\em pointed} metric space is a metric space together with a distinguished element that is often referred to as its root. A {\em correspondence} between two pointed metric spaces $X^\bullet = (X,d_X, x_0)$ and $Y^\bullet =(Y,d_Y,y_0)$ is a subset $R \subset X \times Y$ such that $(x_0, y_0) \in R$, and for each $x \in X$ there is a point $y \in Y$ with $(x,y) \in R$, and for each $y \in Y$ there is a point $x \in X$ with $(x,y) \in R$. The {\em distortion} of the correspondence $R$ is given by
\[
\disto(R) = \sup_{(x_1, y_1), (x_2, y_2) \in R} |d_X(x_1,x_2) - d_Y(y_1,y_2)|.
\]
If $(X,d_X)$ and $(Y,d_Y)$ are compact, the {\em Gromov-Hausdorff distance} between $X^\bullet$ and $Y^\bullet$ is defined by
\[
d_{\text{GH}}(X^\bullet,Y^\bullet) = \frac{1}{2} \inf_R \disto(R) \in [0, \infty[,
\]
with the index $R$ ranging over all correspondences between $X^\bullet$ and $Y^\bullet$, see \cite[Thm.\ 7.3.25]{MR1835418} and \cite[Prop.\ 3.6]{MR3025391}.  

Two pointed metric spaces are \emph{isometric}, if there exists a distance preserving bijection between the two that also preserves the roots. The Gromov-Hausdorff distance does not change if one of the spaces is replaced by another isometric copy. Moreover, two pointed spaces have Gromov-Hausdorff distance zero, if and only if they are isometric, and the Gromov-Hausdorff distance satisfies the axioms of a premetric on the collection of compact pointed metric spaces, see \cite[Thm.\ 7.3.30]{MR1835418} and \cite[Thm.\ 3.5]{MR3025391}. We may thus view $d_{\text{GH}}$ as a metric on the collection $\ndK^\bullet$ of all isometry classes of compact pointed metric spaces. 
\subsection{The continuum random tree}
The continuum random tree (CRT) $\CRT$ is a random metric space that is encoded by the Brownian excursion of duration one. We briefly introduce it following \cite{MR3025391,MR2203728}. Given an arbitrary continuous function $f: [0,1] \to [0, \infty[$  satisfying $f(0) = f(1) = 0$ we may define a premetric $d$ on the interval $[0,1]$ given by
\[
d(u,v) = f(u) + f(v) - 2 \inf_{u \le s \le v} f(s)
\]
for $u \le v$. Let $(\cT_f, d_{\cT_f}) = ([0,1]/ \mathord{\sim}, \bar{d})$ denote the corresponding quotient space obtained by identifying points that have distance zero. We   consider this space as being rooted at the equivalence class $\bar{0}$ of $0$. The random pointed metric space $(\CRT, d_{\CRT}, \bar{0})$ coded by the Brownian excursion of duration one $\me=(\me_t)_{0 \le t \le 1}$ is called the \emph{Brownian continuum random tree} (CRT).

\subsection{Plane trees and Aldous' theorem}

The \emph{Ulam-Harris tree} is defined as an infinite rooted tree with vertex set $\cup_{n\in \mathbb{N}_0} \mathbb{N}^n$ consisting of finite
sequences of natural numbers. The empty string $\emptyset$ is its root,  and the offspring of any vertex $v$ is given by the concatenations $\{vi: i \in \mathbb{N}\}$. In particular, the labelling of the vertices induces a linear order on each offspring set. A \emph{plane tree} is defined as a subtree of the Ulam-Harris tree that contains the root. Any plane tree is a pointed metric space with respect to the graph-metric and the root vertex $\emptyset$. Hence random plane trees may be considered as random elements of the metric space $\ndK^\bullet$.

Let $\xi$ be a random variable with support on $\mathbb{N}_0$. Then, a $\xi$-Galton-Watson tree $\mathcal{T}$ is the family tree of a Galton-Watson branching process with offspring distribution $\xi$, interpreted as a (possibly infinite) plane tree. We call $\mathcal{T}$ \emph{critical} if $\mathbb{E}[\xi] = 1$. The following invariance principle giving a scaling limit for certain random plane trees is due to Aldous \cite{MR1207226} and there exist various extensions, see for example \cite{MR1964956, MR2147221,MR3050512}.
\begin{theorem}
\label{te:gwtconv}
Let $\cT_n$ be a critical $\xi$-Galton-Watson tree conditioned on having $n$ vertices, with the offspring distribution $\xi$ having finite non-zero variance $\sigma^2$. As $n$ tends to infinity,  $\cT_n$ with edges rescaled to length $\frac{\sigma}{2 \sqrt{n}}$ converges in distribution to the CRT, that is
\begin{equation*}
	(\cT_n, \frac{\sigma}{2 \sqrt{n}} d_{\cT_n}, \emptyset ) \convdis (\CRT, d_{\CRT}, \bar{0})
\end{equation*}
in the metric space $(\ndK^\bullet, d_{\text{GH}})$.
\end{theorem}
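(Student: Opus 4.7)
The plan is to encode the conditioned Galton-Watson tree $\cT_n$ by a real-valued process (its height/contour function), establish functional convergence of the rescaled process to the normalised Brownian excursion $\me$, and then transfer this to Gromov-Hausdorff convergence of the underlying metric spaces. First I introduce the \L ukasiewicz walk $W_n(0), W_n(1), \dots, W_n(n)$, defined by $W_n(k+1) - W_n(k) = \xi_{k+1} - 1$, where $\xi_{k+1}$ is the outdegree of the $(k+1)$-th vertex in depth-first order. Being the code of a $\xi$-Galton-Watson tree conditioned on having $n$ vertices, $W_n$ has the law of a random walk with i.i.d.\ steps of distribution $\xi - 1$ conditioned to first hit $-1$ at time $n$. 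The cycle lemma realises this as a cyclic shift of an unconditioned bridge from $0$ to $-1$ of length $n$; combining Donsker's invariance principle for bridges with Vervaat's theorem then yields
\[
\left( \tfrac{1}{\sigma \sqrt n}\, W_n(\lfloor n t \rfloor) \right)_{t \in [0,1]} \convdis (\me_t)_{t \in [0,1]}
\]
in $C([0,1])$.

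Next I pass from $W_n$ to the height process $H_n(k) = d_{\cT_n}(\emptyset, v_k)$, where $v_k$ is the $k$-th visited vertex. Using the identity
\[
H_n(k) = \#\bigl\{\, 0 \le j < k : W_n(j) = \min_{j \le i \le k} W_n(i) \,\bigr\},
\]
a second-moment argument of Marckert-Mokkadem (crucially using $\sigma^2 < \infty$) shows that the processes $\tfrac{1}{\sigma \sqrt n} W_n(\lfloor nt\rfloor)$ and $\tfrac{\sigma}{2\sqrt n} H_n(\lfloor nt\rfloor)$ stay uniformly close in probability, so that $\tfrac{\sigma}{2\sqrt n} H_n(\lfloor n\,\cdot\,\rfloor)$ also converges in distribution to $\me$. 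An elementary deterministic time change, replacing the depth-first indexing by the contour traversal of length $2(n-1)$, transports this convergence to the contour function $C_n$ of $\cT_n$, after rescaling time by $2(n-1)$ and heights by $\sigma/(2\sqrt n)$.

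For the final step, $\cT_n$ is isometric, after quotienting, to the metric space coded by $C_n$, and $\CRT$ is by definition the metric space coded by $\me$. For any two continuous $f, g : [0,1] \to [0,\infty)$ vanishing at the endpoints, the coded pointed metric spaces satisfy
\[
d_{\text{GH}}\bigl((\cT_f, d_{\cT_f}, \bar 0), (\cT_g, d_{\cT_g}, \bar 0)\bigr) \le 2 \|f - g\|_\infty,
\]
since an $\varepsilon$-uniformly-close pair of encodings gives a correspondence of distortion at most $4\varepsilon$ through the canonical projections $[0,1] \to \cT_f,\cT_g$. Combined with the functional convergence from the previous step, the Skorokhod representation theorem delivers the claimed Gromov-Hausdorff convergence of $(\cT_n, \tfrac{\sigma}{2\sqrt n} d_{\cT_n}, \emptyset)$ to $(\CRT, d_{\CRT}, \bar 0)$.

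I expect the principal obstacle to lie in the second step: Donsker's theorem gives the \L ukasiewicz convergence essentially for free, but the height process is not a linear functional of partial sums, and the comparison between $W_n$ and $H_n$ requires the delicate $L^2$-estimate of Marckert-Mokkadem. This is precisely the place where the finiteness of $\sigma^2$ is genuinely needed, and it is also the step that forbids a softer argument going directly from convergence of the walk to convergence of the trees as metric spaces.
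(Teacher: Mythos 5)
You should first note that the paper does not prove Theorem~\ref{te:gwtconv} at all: it is Aldous' invariance principle, quoted with a citation to \cite{MR1207226} (and to the extensions \cite{MR1964956,MR2147221,MR3050512}) and then used as a black box on the coloured subtree $\cT_n$ in Section~\ref{sec:pfmain}. Your proposal is therefore not an alternative to an argument in the paper but a reconstruction of the imported result, and the route you take is the standard modern one: coding by the {\L}ukasiewicz walk, Donsker for the bridge combined with the cycle lemma and Vervaat's transformation, comparison of the walk with the height and contour processes, and the bound $d_{\text{GH}}(\cT_f,\cT_g)\le 2\lVert f-g\rVert_\infty$ to transfer functional convergence to Gromov--Hausdorff convergence (your distortion computation giving $4\varepsilon$, hence $d_{\text{GH}}\le 2\varepsilon$, is correct). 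This architecture is sound and is closer in spirit to Le Gall's treatment than to Aldous' original proof. One point needs repair: you attribute the uniform closeness of $\tfrac{1}{\sigma\sqrt n}W_n$ and $\tfrac{\sigma}{2\sqrt n}H_n$ to a ``second-moment argument of Marckert--Mokkadem, crucially using $\sigma^2<\infty$'', but Marckert and Mokkadem establish their estimate under a finite \emph{exponential} moment hypothesis on $\xi$; in the generality stated here (finite variance only) this comparison is genuinely harder and is due to Duquesne and Duquesne--Le Gall, so you should either invoke those works or supply the stronger estimate -- as written, that step does not cover all offspring laws allowed by the theorem. Two smaller things to make explicit in a full write-up: the conditioning $\{|\cT|=n\}$ is non-degenerate only along $n\equiv 1$ modulo the span of $\xi$, and the bridge version of Donsker's theorem needs this lattice structure (via a local limit theorem or an absolute-continuity argument); and the discrete tree $(\cT_n,d_{\cT_n})$ is not literally the space coded by its contour function but lies within bounded Gromov--Hausdorff distance of it before rescaling, an error that disappears after multiplication by $n^{-1/2}$.
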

In the following we  use a more compact notation, writing $\alpha \cT_n$ and $\CRT$ when refering to $(\cT_n, \alpha d_{\cT_n}, \emptyset )$ and $(\CRT, d_{\CRT}, \bar{0})$.

\subsection{Tail-bounds for the height and width}
In \cite[Thm.\ 1.2]{MR3077536} the following tail-bounds were obtained.
\begin{theorem}
\label{te:theoconv}
Let $\cT_n$ be a critical $\xi$-Galton-Watson tree conditioned on having $n$ vertices, with the offspring distribution $\xi$ having finite non-zero variance $\sigma^2$. Then there are constants $C,c>0$ such that for all $x\ge0$ and $n$
\[
\Pr{\He(\cT_n) \ge x} \le C \exp(-cx^2/n), \quad \Pr{\Wi(\cT_n) \ge x} \le C \exp(-cx^2/n).
\]
\end{theorem}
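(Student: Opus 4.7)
The plan is to pass to the Łukasiewicz (depth-first) walk encoding of $\cT_n$ and reduce both tail bounds to concentration estimates for a random walk. Let $\xi_1, \xi_2, \dots$ be independent copies of $\xi$ and set $S_k = \sum_{i=1}^k (\xi_i - 1)$. It is classical that $\cT_n$ is distributed as the plane tree coded by $(S_0, \dots, S_n)$ conditioned on $S_n = -1$ and $S_j \ge 0$ for $0 \le j < n$. The cyclic lemma of Dwass--Spitzer says that exactly one cyclic shift of a bridge $(S_0,\dots,S_n)$ with $S_n=-1$ satisfies the non-negativity constraint, so for functionals $\Phi$ of the unrooted shape (diameter, width, twice the height, etc.) one gets
\[
\Pr{\Phi(\cT_n) \ge x} \le n\, \Pr{\Phi \ge x,\; S_n=-1}.
\]
Combined with the local central limit theorem $\Pr{S_n=-1} \asymp n^{-1/2}$, this reduces the problem to bounding the corresponding tails under the \emph{unconditioned} walk $(S_0,\dots,S_n)$, at the cost of a polynomial prefactor that is easily absorbed in the sub-Gaussian exponent.

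For the height, I would use the well-known identity $H_k = \#\{0 \le j < k : S_j = \min_{j \le i \le k} S_i\}$ for the depth $H_k$ of the $k$-th vertex in DFS order, so that $\He(\cT_n) = \max_{1 \le k \le n} H_k$. Each $H_k$ is stochastically dominated by the number of weak descending records of the reversed random walk of length $k$. Truncating $\xi$ at a level $K \asymp x/\sqrt{\log n}$ makes the walk increments bounded; the contribution from increments exceeding $K$ is controlled by $n\Pr{\xi > K}$ which is negligible under $\Ex{\xi^2}<\infty$. For the truncated walk an exponential martingale (Azuma--Hoeffding) bound on the record process yields $\Pr{H_k \ge x} \le C\exp(-cx^2/k)$, and a union bound over $k \le n$ gives the claimed height tail.

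For the width I would use the contour function $C_n:[0,2(n-1)]\to\ndN_0$ of $\cT_n$. The width at level $\ell$ equals the number of upcrossings $N_\ell$ of level $\ell$ by $C_n$, and by Markov property each such upcrossing launches an independent excursion above $\ell$ whose length has a geometric-type tail under the offspring assumption. A Chernoff argument on the sum of these excursion counts gives $\Pr{N_\ell \ge x} \le C\exp(-cx^2/n)$ for each $\ell$. Taking a union bound over $\ell \le \He(\cT_n)$ and combining with the already-established sub-Gaussian height tail yields the desired estimate for $\Wi(\cT_n)$.

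The main obstacle is the finite-variance hypothesis: with exponential moments every step is routine, but under only $\Ex{\xi^2}<\infty$ one must truncate $\xi$ at a level depending on $x$ and use a Fuk--Nagaev type inequality to separate the ``Gaussian'' bulk from a negligibly rare ``heavy tail'' event. The truncation level must be chosen delicately so that, after the $\sqrt n$ factor from the cyclic-lemma/LLT reduction and the union bounds over $k$ and $\ell$, the surviving exponent stays of order $x^2/n$ with a uniform constant $c>0$. Threading this balance, uniformly in $n$ and $x$, is the technical heart of the proof.
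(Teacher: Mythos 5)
The first thing to note is that the paper does not prove this statement at all: Theorem~\ref{te:theoconv} is quoted verbatim from Addario-Berry, Devroye and Janson \cite[Thm.~1.2]{MR3077536} and used as a black box in the proof of Theorem~\ref{te:main2}. So there is no internal proof to compare yours against; the only question is whether your sketch would actually establish the cited result, and as written it would not, although its general philosophy (Łukasiewicz walk, record representation of depths, cycle lemma plus local CLT to remove the conditioning) is indeed close in spirit to the arguments in \cite{MR3077536}.

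The concrete gaps are the following. First, your truncation step fails under the stated hypothesis: with $K \asymp x/\sqrt{\log n}$ and only $\Ex{\xi^2}<\infty$, the error term $n\Pr{\xi > K}$ can only be bounded by Chebyshev as $n\,\Ex{\xi^2}\log n/x^2$, which in the critical regime $x \asymp \sqrt{n}$ is of order $\log n$ rather than negligible; finite variance gives no better tail for $\xi$. Second, the polynomial prefactor $n^{3/2}$ (in fact $n^{5/2}$ after your union bounds over $k$ and $\ell$) is not ``easily absorbed'': for $x$ between order $\sqrt{n}$ and order $\sqrt{n\log n}$ the target $\exp(-cx^2/n)$ is only polynomially small, and $n^{5/2}\exp(-c'x^2/n)\le C\exp(-cx^2/n)$ forces $x^2/n\gtrsim \log n$; this intermediate regime is exactly where the proof in \cite{MR3077536} has to work hardest, so waving it away skips the technical heart rather than threading it. Third, the width argument is wrong as stated: excursions of a critical, finite-variance walk above a level have polynomial tails (the total progeny of a critical Galton--Watson tree has tail of order $t^{-1/2}$), not ``geometric-type'', and more fundamentally the unconditioned quantity your reduction would require you to bound (essentially $\max_{k\le n} S_k$, since the width is comparable to the maximal queue length) does \emph{not} have sub-Gaussian tails when $\xi$ has only two finite moments -- a single huge offspring produces width $x$ with probability about $n\Pr{\xi\ge x}$, which decays only polynomially. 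The sub-Gaussian bound for the conditioned tree survives only because the event $\{S_n=-1\}$ forces the walk, whose steps are bounded below by $-1$, to compensate a large value at exponential cost; so the conditioning must be retained rather than traded for a polynomial factor, and your reduce-to-the-unconditioned-walk strategy cannot give the width bound at all. For the purposes of this paper the intended route is simply to cite \cite{MR3077536}; note also that in the application here $\xi$ even has exponential moments, so a self-contained proof exploiting that would be substantially easier than the finite-variance statement you set out to prove.
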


\section{Combinatorial preliminaries} \label{sec:ccgs}

We recall relevant notions and tools from combinatorics. In particular, we discuss constructions of combinatorial classes following Joyal \cite{MR633783} and Flajolet and Sedgewick \cite{MR2483235}, and give a brief account on Boltzmann samplers following Flajolet, Fusy, and Pivoteau \cite{MR2498128}. This will be our main tools for studying the class of P\'olya trees.

\subsection{Combinatorial classes and generating series}

A {\em combinatorial class} $\cC$ is a set together with a {\em size-function} $| \cdot |: \cC \to \ndN_0$. We require that for any $n \in \mathbb{N}_0$ the subset $\cC_n \subset \cC$ of all $n$-sized elements is finite. The {\em ordinary generating series} of a class $\cC$ is defined as the formal power series
\[
\cC(z) = \sum_{n \in \mathbb{N}_0} |\cC_n| z^n,
\]
with $|\cC_n|$ denoting the number of elements of the set $\cC_n$. We set $[z^n]\cC(z) = |\cC_n|$.

\subsection{Permutations}
As an example of a combinatorial class, consider \[\cS = \bigcup_{n \in \mathbb{N}_0} \cS_n\] of all permutations with $\cS_n$ denoting the symmetric group of order $n$. Its ordinary generating series is given by \[\cS(z) = \sum_{n \in \mathbb{N}_0} n! z^n.\]
Recall that any permutation $\sigma$ may be written in an essentially unique way as a product of disjoint cycles (corresponding to the orbits of the permutation). In the following we are going to let $\sigma_i$, $i \ge 1$ denote the number of cycles of length $i$, that is, with exactly $i$ elements, in this factorization. Here we count fixpoints as $1$-cycles.

\subsection{Operations on classes}
\subsubsection{Product classes}
Given two classes $\cC$ and $\cD$ we may form the {\em product class} as the set-theoretic product \[\cC \cdot \cD = \cC \times \cD\] with the size-function given by $|(C,D)| = |C| + |D|$ for any $C \in \cC$ and $D \in \cD$. It is a straightforward consequence, see also  Chapter I.1.\ in~\cite{MR2483235}, that
\[
(\cC \cdot \cD)(z) = \cC(z) \cD(z).
\]

\subsubsection{Multisets}
We may also form the class 
$\MSet(\cC)$ of all {\em multisets} of elements of $\cC$, that is, sets of the form
\[
\{(C_1, n_1), \ldots, (C_k, n_k)\}
\]
with $k \in \ndN_0$, $C_1, \ldots, C_k \in \cC$ being pairwise distinct and $n_1, \ldots, n_k \in \ndN$. Here the sum $\sum_{i=1}^n n_i$ denotes the {\em number of elements} of the multiset. The size-function for multisets is given by
\[
|\{(C_1, n_1), \ldots, (C_k, n_k)\}| = \sum_{i=1}^k |C_i|n_i.
\]
For any subset $\Omega \subset \ndN_0$ we may also form the class $\MSet_\Omega(\cC) \subset \MSet(\cC)$ by restricting to multisets whose number of elements lies in $\Omega$.

In order to express the ordinary generating series for the class of multisets in $\cC$, we require the concept of {\em cycle index sums}. Recall that for any permutation $\sigma$ we let $\sigma_i$ denote the number of cycles of length $i$ of $\sigma$. 

\begin{definition}
For any subset $\Omega \subset \ndN_0$ define the cycle index sum
\[
Z_\Omega(s_1, s_2, \ldots) = \sum_{k \in \Omega} \frac{1}{k!} \sum_{\sigma \in \cS_k} s_1^{\sigma_1} \cdots s_k^{\sigma_k}.
\]
\end{definition}
\noindent
For example, when $\Omega=\ndN_0$ a short calculation, see below, shows that 
\[
Z_{\ndN_0}(s_1, s_2, \dots) = \exp\Big( \sum_{i \ge 1} {s_i}/{i}\Big).
\]
Indeed, for any permutation $\sigma$ the series $(\sigma_i)_{i \in \mathbb{N}}$ is an element of the   space $\ndN_0^{(\ndN)}$ of all sequences in $\ndN_0$ with finite support. Conversely, to any element $m=(m_i)_{i \in \mathbb{N}} \in \ndN_0^\ndN$ correspond only permutations of order $n = \sum_{i=1}^{\infty} i m_i$ and their number is given by $n! / \prod_{i \ge 1} ( m_i! \, i^{m_i})$. Hence 
\begin{align*}
Z_{\ndN_0}  = \sum_{m \in \ndN_0^{(\ndN)}} \prod_{i \ge 1}  \frac{s_i^{m_i}} { m_i! \, i^{m_i}} = \prod_{i \ge 1} \sum_{m_i \ge 0}   \frac{s_i^{m_i}} { m_i! \, i^{m_i}} = \prod_{i \ge 1}  \exp ({s_i}/{i}) = \exp( \sum_{i \ge 1} {s_i}/{i}).
\end{align*}
We may now express the ordinary generating series for a multiset of objects. This result is implicit in Harary and Palmer~\cite{MR0357214} as an application of P\'olya's Enumeration Theorem; see also Joyal \cite[Prop. 9]{MR633783} for a formulation in a more general setting.
\begin{proposition}
\label{pro:numset}
The ordinary generating series of a multiset class $\MSet_\Omega(\cC)$ is given by
\[
\MSet_\Omega(\cC)(z) = Z_\Omega(\cC(z), \cC(z^2), \cC(z^3), \ldots).
\]
In particular,
\[
\MSet(\cC)(z) = \exp\big(\sum_{k \ge 1} \cC(z^k)/k\big).
\]
\end{proposition}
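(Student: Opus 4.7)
My plan is to identify a $k$-element multiset from $\cC$ with an orbit of the natural action of the symmetric group $\cS_k$ on the Cartesian product $\cC^k$ by permutation of coordinates, and then to apply a weighted version of Burnside's lemma. Concretely, a multiset $\{(C_1,n_1),\dots,(C_j,n_j)\}$ with $n_1+\cdots+n_j = k$ corresponds bijectively to the $\cS_k$-orbit of any tuple in $\cC^k$ that lists each $C_i$ exactly $n_i$ times, and the multiset's size agrees with $\sum_i n_i |C_i|$, i.e.\ with the sum of the sizes of the coordinates of any representative.

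For fixed $k \in \Omega$, I will weight each tuple $(C_1,\dots,C_k) \in \cC^k$ by $z^{|C_1|+\cdots+|C_k|}$. Since this weight is constant on $\cS_k$-orbits, the weighted form of Burnside's lemma will give
\[
\sum_{\substack{M \in \MSet(\cC) \\ \#M = k}} z^{|M|} \;=\; \frac{1}{k!}\sum_{\sigma \in \cS_k} \sum_{\substack{(C_1,\dots,C_k) \in \cC^k \\ \sigma \cdot (C_1,\dots,C_k) = (C_1,\dots,C_k)}} z^{|C_1|+\cdots+|C_k|},
\]
where $\#M$ denotes the number of elements of $M$ counted with multiplicity. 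The key step is the observation that a tuple is fixed by $\sigma$ if and only if its entries are constant on every cycle of $\sigma$; hence a $\sigma$-fixed tuple is determined by one choice from $\cC$ per cycle, and a cycle of length $i$ contributes a factor $\cC(z^i)$ to the weighted count, since its $i$ coincident coordinates jointly contribute $z^{i|C|}$ for the chosen object $C$. This yields $\prod_{i \ge 1} \cC(z^i)^{\sigma_i}$ for the inner sum.

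Summing over $k \in \Omega$, the resulting double sum matches by definition $Z_\Omega(\cC(z), \cC(z^2), \ldots)$, which establishes the first claim. The second formula then follows by specializing to $\Omega = \ndN_0$ and substituting $s_i = \cC(z^i)$ into the closed form $Z_{\ndN_0}(s_1, s_2, \ldots) = \exp(\sum_{i \ge 1} s_i/i)$ already derived in the excerpt.

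I expect the only delicate point to be the bookkeeping of weights in the Burnside step, i.e.\ verifying that the $z$-grading on $\cC^k$ descends to the correct grading on the orbit space and that the bijection between orbits and multisets is indeed size-preserving. Since permuting coordinates preserves the coordinate size sum, and that sum equals $|M|$ for the corresponding multiset, this is routine; what remains is a straightforward symbolic manipulation of the cycle index.
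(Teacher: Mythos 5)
Your argument is correct. Note, though, that the paper does not prove this proposition at all: it simply cites Harary--Palmer and Joyal, since the formula is a standard consequence of P\'olya's enumeration theorem. Your weighted Burnside (Cauchy--Frobenius) argument is essentially the textbook proof behind those citations: identifying $k$-multisets with $\cS_k$-orbits on $\cC^k$, noting that $\sigma$-fixed tuples are constant on cycles so that a length-$i$ cycle contributes $\cC(z^i)$, and averaging over $\cS_k$ is exactly how the cycle-index formula is derived there, so you are supplying in full the proof the paper outsources rather than taking a genuinely different route. Two small points worth making explicit if you write it up: the weighted Burnside lemma applies here because in each fixed degree $n$ only finitely many tuples contribute (each $\cC_m$ is finite), so all manipulations are legitimate degree by degree as formal power series; and one should assume $\cC_0=\emptyset$ (as holds for the class $\cA_\Omega$ to which the proposition is applied), since otherwise $\MSet(\cC)$ has infinitely many objects of a given size, only finitely many $k\in\Omega$ contribute in each degree, and the identity reads correctly.
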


\subsection{Boltzmann samplers}
\label{se:bosa}

Given a nonempty combinatorial class $\cC$ and a parameter $x>0$ with $\cC(x) < \infty$, we may consider the corresponding {\em Boltzmann distribution} on $\cC$ that assigns probability weight $x^{|C|} / \cC(x)$ to any element $C \in \cC$. A {\em Boltzmann sampler} $\Gamma \cC (x)$ is a stochastic process that generates elements from $\cC$ according to the Boltzmann distribution  with parameter $x$. There are various rules according to which we may construct such samplers.

\subsubsection{Product}
Let $\cC$ and $\cD$ be nonempty combinatorial classes and $x>0$ such that $\cC(x), \cD(x)$ are finite. Then a Boltzmann sampler $\Gamma (\cC \cdot \cD)(x)$ for the product is given as follows.
\begin{enumerate}[\qquad 1.]
	\item Draw an element $C \in \cC$ using a Boltzmann sampler $\Gamma \cC(x)$.
	\item Draw independently an element $D \in \cD$ using a Boltzmann sampler $\Gamma \cD(x)$.
	\item Return the pair $(C,D)$.
\end{enumerate}
See, for example, in Section 2 of \cite{MR2498128} for the (simple) justification.

\subsubsection{Multiset}
Let $\cC$ be a nonempty combinatorial class and $\Omega \subset \ndN_0$ a subset. Then for any parameter $x>0$ with $\MSet_\Omega(\cC)(x)< \infty$ a Boltzmann sampler $\Gamma \MSet_\Omega(\cC)(x)$ is given as follows.
\begin{enumerate}[\qquad 1.]
	\item Draw a permutation $\sigma$ from   $\bigcup_{k \in \Omega}  \cS_k$ such that for each $k \in \Omega$ and $\nu \in \cS_k$
	\[
		\Pr{\sigma = \nu} = \frac{1}{k!} \cC(x)^{\nu_1} \cC(x^2)^{\nu_2} \cdots \cC(x^k)^{\nu_k} / \,\MSet_\Omega(\cC)(x).
	\]
	\item For each cycle $\tau$ of $\sigma$ let $|\tau|$ denote its length. Draw a random graph $C_\tau$ using a Boltzmann sampler $\Gamma \cC(x^{|\tau|})$.
	\item Return the multiset of $\cC$-objects that contains any $C \in \cC$ precisely $\sum_{\tau, C_\tau = C} |\tau|$ times, with the index $\tau$ ranging over all cycles of $\sigma$.
\end{enumerate}
For a proof see \cite[Prop.\ 38]{MR2810913} and \cite[Thm. 4.2]{MR2498128}. 

\section{Random P\'olya trees} \label{sec:poltree}

\subsection{Combinatorial decomposition of P\'olya trees}
Let $\Omega \subset \ndN_0$ be a subset containing $0$ and at least one integer $\ge 2$. Let $\cA_\Omega$ denote the combinatorial class of P\'olya trees with vertex outdegrees in $\Omega$.
Any P\'olya tree $A$ is uniquely determined by the multiset \[
M(A) = \{(A_1, n_1), \ldots, (A_k, n_k)\}
\]
of smaller P\'olya trees obtained by removing the root vertex of $A$. The tree $A$ has vertex outdegrees in $\Omega$ if and only if the number of elements of the multiset lies in $\Omega$, and if each of its elements $A_i$ belongs to $\cA_\Omega$. Thus, letting $\cX = \{o\}$ denote the combinatorial class constisting of a single object with size $1$, the map
\begin{align}
\label{eq:map}
\cA_\Omega \to \cX \cdot \MSet(\cA_\Omega), \quad A \mapsto (o, M(A))
\end{align}
is a size-preserving bijection. Using Proposition \ref{pro:numset}, this yields the equation
\begin{align}
\label{eq:relation}
\cA_\Omega(z) = z Z_\Omega(\cA_\Omega(z),\cA_\Omega(z^2), \ldots ).
\end{align}

\subsection{Enumerative properties}
\label{SeFrsec:enumprop}
In this section we collect basic analytic facts regarding P\'olya trees, which are frequently used in the proofs of the main theorems.
The following result is obtained by applying a general enumeration theorem due to Bell, Burris and Yeats \cite[Thm.\ 75]{MR2240769}. Special cases such as for trees with less general vertex-degree restrictions are classical combinatorial results, see e.g.~\cite[Thm. VII.4]{MR2483235} but also P\'olya \cite{MR1577579} and Otter \cite{MR0025715}. We do provide an explicit proof for the readers convenience, but do not claim novelty of this result. Although it does not seem to be  explicitly stated in this generality in the literature, it is implicit in the work \cite{MR2240769} and the present proof summarizes the corresponding arguments.

\begin{proposition}
	\label{pro:enumeration}
	\label{SeFrpro:ser1}
	\label{SeFrpro:cnt}
	Let~$\rho_\Omega$ denote the radius of convergence of the ordinary generating function~$\cA_\Omega(z)$. Then the following holds.
	\begin{enumerate}[i)]
		\item We have that~$0<\rho_\Omega<1$ and~$0< \cA_\Omega(\rho_\Omega) < \infty$.
		\item For some~$\epsilon > 0$, the function~$E(z,w) = zZ_\Omega(w,\cA_\Omega(z^2), \cA_\Omega(z^3), \ldots)$ satisfies \[E(\rho_\Omega + \epsilon, \cA_\Omega(\rho_\Omega) + \epsilon)< \infty.\]
		\item For some constant~$d_\Omega > 0$, the number of P\'olya trees with~$n$ vertices and outdegrees in ~$\Omega$ is given by \[[z^n]\cA_\Omega(z) \sim d_\Omega n^{-3/2} \rho_\Omega^{-n}.\] 
	\end{enumerate}
\end{proposition}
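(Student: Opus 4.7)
The strategy is the standard smooth implicit-function schema for P\'olya-type functional equations, in the spirit of \cite[Thm.\ 75]{MR2240769} and \cite[Ch.\ VII.4]{MR2483235}. I would rewrite \eqref{eq:relation} as $\cA_\Omega(z) = \phi(z, \cA_\Omega(z))$ with
\[
\phi(z, w) = z\, Z_\Omega\bigl(w, \cA_\Omega(z^2), \cA_\Omega(z^3), \ldots\bigr).
\]
The guiding heuristic is that, once $\rho_\Omega < 1$ is known, the auxiliary series $\cA_\Omega(z^k)$ for $k \ge 2$ have radii of convergence $\rho_\Omega^{1/k} > \rho_\Omega$ and only contribute analytic terms near the dominant singularity. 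All three parts then reduce to analyzing the scalar fixed-point problem $y = \phi(z, y)$ at its positive real critical point.

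First I would settle $0 < \rho_\Omega < 1$. The bound $\rho_\Omega > 0$ holds because any P\'olya tree is the quotient of a plane tree by its automorphism group, so $[z^n]\cA_\Omega(z)$ is bounded by the $n$th Catalan number, giving $\rho_\Omega \ge 1/4$. For $\rho_\Omega < 1$, pick $d \in \Omega$ with $d \ge 2$; iterated grafting of a small $d$-ary building block produces at least $a^n$ distinct P\'olya trees of size $\Theta(n)$ for some $a>1$, forcing exponential coefficient growth.

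Next I would prove (i) and (ii) together by working on the positive real axis. The cycle-index majorization $Z_\Omega(s_1, s_2, \ldots) \le \exp\bigl(\sum_{i \ge 1} s_i/i\bigr)$ combined with $\rho_\Omega < 1$ shows that $\phi(x, y)$ is finite on $[0, \rho_\Omega^{1/2}) \times [0, R)$ for some $R > 0$ and extends to a jointly analytic function of two complex variables on a corresponding bi-disc. Monotonicity of $\phi$ in both arguments together with the classical characteristic-system argument produces a unique positive pair $(\rho, y_0)$ satisfying simultaneously $y_0 = \phi(\rho, y_0)$ and $\partial_w \phi(\rho, y_0) = 1$. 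Monotonicity and uniqueness force $\rho = \rho_\Omega$ and $y_0 = \cA_\Omega(\rho_\Omega) < \infty$, which is (i); part (ii) follows immediately, since $\phi$ is then analytic in a bi-disc around $(\rho_\Omega, y_0)$ of some positive radius $\epsilon$.

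For (iii) I would apply Weierstrass preparation to $\Phi(z, w) = w - \phi(z, w)$ at $(\rho_\Omega, y_0)$: since $\partial_w \Phi$ vanishes there while $\partial_{ww}\Phi$ and $\partial_z \Phi$ do not (by monotonicity and the characteristic equations), one obtains the square-root expansion
\[
\cA_\Omega(z) = y_0 - c\sqrt{1 - z/\rho_\Omega} + O\bigl(1 - z/\rho_\Omega\bigr)
\]
in a slit neighborhood of $\rho_\Omega$. The hypotheses $0 \in \Omega$ and $\Omega \cap \{d : d \ge 2\} \ne \emptyset$ imply, via a standard aperiodicity argument, that $z = \rho_\Omega$ is the unique dominant singularity once the periodicity induced by $\gcd(\Omega)$ is factored out. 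The Flajolet--Odlyzko transfer theorem then delivers $[z^n]\cA_\Omega(z) \sim d_\Omega\, n^{-3/2}\rho_\Omega^{-n}$ along $n \equiv 1 \pmod{\gcd(\Omega)}$. The main obstacle I anticipate is verifying that the characteristic system actually has a solution with $y_0 < \infty$, rather than $\cA_\Omega(x)$ blowing up before reaching $\rho_\Omega$; this is where the cycle-index majorization combined with $\rho_\Omega < 1$ is essential, ensuring that $\phi(x, y) - y$ eventually becomes negative in $y$ whenever $x$ is bounded away from $1$.
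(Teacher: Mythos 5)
Your overall route (treat $\cA_\Omega(z)=\phi(z,\cA_\Omega(z))$ with $\phi(z,w)=zZ_\Omega(w,\cA_\Omega(z^2),\dots)$, note that the arguments $\cA_\Omega(z^k)$, $k\ge 2$, are analytic past $\rho_\Omega$ once $\rho_\Omega<1$ is known, then run the characteristic-system/singularity-analysis machinery) is viable and is essentially what the cited theorem of Bell--Burris--Yeats encapsulates. But the step you yourself flag as the main obstacle -- finiteness of $\cA_\Omega(\rho_\Omega)$, i.e.\ that the singularity is a tangency point rather than a blow-up -- is justified by a claim that is false. You assert that the cycle-index majorization together with $\rho_\Omega<1$ ensures that $\phi(x,y)-y$ ``eventually becomes negative in $y$''. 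It does not: since $0\in\Omega$ and some $\ell\ge 2$ lies in $\Omega$, the identity permutation in $\cS_\ell$ contributes the monomial $s_1^{\ell}$ to $Z_\Omega$, so $\phi(x,y)\ge x\,y^{\ell}/\ell!$, and $y\mapsto\phi(x,y)$ is convex and superlinear; hence $\phi(x,y)-y\to+\infty$ as $y\to\infty$ for every fixed $x>0$. An upper (exponential) majorant of $Z_\Omega$ can never force $\phi(x,\cdot)$ below the diagonal, so the argument as stated proves nothing about boundedness of $\cA_\Omega(x)$ as $x\uparrow\rho_\Omega$, and existence of a finite solution of the characteristic system is left unproved. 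The correct ingredient is the opposite inequality: from $\cA_\Omega(x)=\phi(x,\cA_\Omega(x))\ge x\,\cA_\Omega(x)^{\ell}/\ell!$ one gets $\cA_\Omega(x)\le(\ell!/x)^{1/(\ell-1)}$ for $0<x<\rho_\Omega$, whence $\cA_\Omega(\rho_\Omega)<\infty$ by monotone convergence; the criticality $\partial_w\phi(\rho_\Omega,\cA_\Omega(\rho_\Omega))=1$ then follows from Pringsheim plus the implicit function theorem (this is exactly how the paper argues, both in Proposition \ref{pro:enumeration} and later for $\Ex{\xi}=1$). With this repair your part (i) and (ii) go through.

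Two further, smaller points of comparison. For $0<\rho_\Omega<1$ the paper does not construct an explicit exponential family: positivity of $\rho_\Omega$ comes from coefficientwise domination by the unrestricted P\'olya-tree series, and $\rho_\Omega<1$ is extracted from the functional equation itself (the summand of a single $\ell$-cycle gives $\cA_\Omega(\rho_\Omega)\ge\rho_\Omega\cA_\Omega(\rho_\Omega^{\ell})/\ell!$, which rules out $\rho_\Omega>1$, and a separate short argument rules out $\rho_\Omega=1$); your Catalan bound and grafting construction can be made rigorous but need a non-isomorphism argument that you only gesture at. For (iii) the paper simply invokes \cite[Thm.\ 28]{MR2240769}, which already handles the $\gcd(\Omega)$-periodicity; if you insist on redoing the square-root expansion and transfer by hand, the ``standard aperiodicity argument'' you appeal to is precisely the part that needs care (uniqueness of dominant singularities on $|z|=\rho_\Omega$ modulo the period, and existence of trees of all large admissible sizes $n\equiv 1\bmod\gcd(\Omega)$), so either cite the general theorem as the paper does or supply that argument explicitly.
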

\begin{proof}
	We start with the proof of i).
	The series $\cA_{\Omega}(z)$ is dominated coefficentwise by the ordinary generating series $\cA(z) = \cA_{\ndN_0}(z)$ of all P\'olya trees  and it is known that $\cA(z)$ is analytic at the origin (see e.g.\ \cite[Prop. VII.5]{MR2483235} and \cite{MR1577579,MR0025715}).
	Hence ${\rho_\Omega}>0$.  As formal power series we have by  \eqref{eq:relation} that $\cA_{\Omega}(z) = z Z_{{{\Omega}}}(\cA_{\Omega}(z), \cA_{\Omega}(z^2), \ldots)$. The coefficients of all involved series are nonnegative, hence we may lift this identity of formal power series to an identity of real numbers. By assumption, $0 \in \Omega$ and there is an integer $\ell \ge 2$ such that $\ell \in \Omega$. Thus, for all $0 < x < {\rho_\Omega}$ 
	\begin{equation}
	\label{eq:AOmegalb}
	\cA_{\Omega}(x) \ge x\Big(1 + \frac{1}{\ell!} \sum_{\sigma \in \cS_\ell} \cA_{\Omega}(x)^{\sigma_1} \cA_{\Omega}(x^2)^{\sigma_2} \cdots \cA_{\Omega}(x^\ell)^{\sigma_\ell}\Big)
	\end{equation}
	with $\cS_\ell$ denoting the symmetric group of order $\ell$ and $\sigma_i$ denoting the number of cycles of length $i$ of $\sigma$. In particular, by considering the summand for $\sigma = \text{id}$, we have that
	\[
	\cA_{\Omega}(x) \ge x (\cA_{\Omega}(x))^{\ell} / \ell!.
	\]
	Since $\ell \ge 2$ this implies that the limit $\lim_{x \uparrow {\rho_\Omega}} \cA(x)$ is finite and hence $\cA_{\Omega}({\rho_\Omega})$ is finite.

	Moreover, considering the summand in~\eqref{eq:AOmegalb} for $\sigma$ a cycle of length $\ell$ yields that 
	\[
	\infty > \cA_{\Omega}({\rho_\Omega}) \ge {\rho_\Omega} (\cA_{\Omega}({\rho_\Omega}^\ell)) / \ell!.
	\]
	This implies that ${\rho_\Omega} \le 1$ because otherwise $\cA({\rho_\Omega}^\ell) = \infty$. If ${\rho_\Omega}=1$, then~\eqref{eq:AOmegalb} would imply that $\cA_{\Omega}(1) \ge 1$. Applying $(*)$ yields \[\cA_{\Omega}(1) \ge 1 + \cA_{\Omega}(1),\] which is clearly impossible. Hence our premise cannot hold and thus ${\rho_\Omega} < 1$.  We proceed with showing ii).
		We have that $\cA_{\Omega}(z) = E(z, \cA_{\Omega}(z))$. The series $E^{\Omega}(z,w)$ is dominated coefficient-wise by \[z\exp(w + \sum_{i\ge 2}\cA_{\Omega}(z^i)/i).\] Since ${\rho_\Omega} < 1$ it follows that there is an $\epsilon>0$ such that $E({\rho_\Omega} + \epsilon, \cA_{\Omega}({\rho_\Omega}) + \epsilon) < \infty.$ This establishes ii). To see the last claim, by a general enumeration result given in \cite[Thm. 28]{MR2240769} it follows that
		\[
		[z^m]\cA_{\Omega}(z) \sim \gcd(\Omega) \sqrt{\frac{{\rho_\Omega} E_z ({\rho_\Omega},\cA_{\Omega}({\rho_\Omega}))}{2\pi E_{ww}({\rho_\Omega},\cA_{\Omega}({\rho_\Omega}))}} {\rho_\Omega}^{-m} m^{-3/2}, \quad m \equiv 1 \mod \gcd(\Omega).
		\]
\end{proof}

\subsection{A Boltzmann sampler for random P\'olya trees}
Let $\Omega \subset \ndN_0$ denote a subset containing $0$ and at least one integer $\ge 2$. Recall that we let $\cA_\Omega$ denote the combinatorial class of P\'olya trees with vertex outdegrees in $\Omega$.

The size-preserving bijection in \eqref{eq:map} between the classes $\cA_\Omega$ and $\cX \cdot \MSet_\Omega(\cA_\Omega)$, where each tree corresponds to the multiset of trees pendling from its roots, allows us to construct a Boltzmann sampler $\Gamma \cA_\Omega$ for P\'olya trees.  The Boltzmann distribution is a measure on P\'olya trees with an arbitrary number of vertices. However, any tree with $n$ vertices has the same probability, i.e., the distribution conditioned on the event that the generated tree has $n$ vertices is \emph{uniform}. This will allow us to reduce the study of properties of a random P\'olya tree with exactly $n$ vertices to the study of $\Gamma \cA_{\Omega}$.

\begin{lemma}
	\label{SePole:sampler}
	The following recursive procedure~$\Gamma \cA_{\Omega}(x)$ terminates almost surely and draws a random P\'olya tree with outdegrees in~${\Omega}$ according to the \emph{Boltzmann distribution} with parameter~$0< x \le \rho_{\Omega}$, i.e. any object with~$n$ vertices gets drawn with probability~$x^n / \cA_{\Omega}(x)$.
	\begin{enumerate}[1.]
		\item Start with a root vertex~$v$.
		\item Let $\sigma(v)$ be a random permutation drawn from the union of permutation groups~$\bigcup_{k \in {\Omega}} \cS_k$ with distribution given by 
		\[
		\Pr{\sigma(v) = \nu}
		=
		\frac{x}{\cA_{\Omega}(x)} \frac{1}{k!} \cA_{\Omega}(x)^{\nu_1}\cA_{\Omega}(x^2)^{\nu_2} \cdots \cA_{\Omega}(x^k)^{\nu_k}
		\]
		for each~$k \in {\Omega}$ and~$\nu \in \cS_k$. Here~$\nu_i$ denotes the number of cycles of length~$i$ of the permutation~$\nu$. In particular,~$\nu_1$ is the number of fixpoints of~$\nu$.
		\item If~$\sigma(v) \in \cS_0$, then return the tree consisting of the root only and stop. Otherwise, for each cycle~$\tau$ of~$\sigma(v)$ let~$\ell_\tau \ge 1$ denote its length and draw a Polya tree~$A_\tau$ by an independent {\em recursive} call to the sampler~$\Gamma \cA_{\Omega}(x^{\ell_\tau})$. Make~$\ell_\tau$ identical copies of the tree~$A_\tau$ and connect their roots to the vertex~$v$ by adding edges. Return the resulting tree and stop.
	\end{enumerate}
\end{lemma}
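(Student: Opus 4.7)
The plan is to recognize the procedure as the natural recursive unfolding of the standard Boltzmann samplers for products and multisets, applied to the bijection \eqref{eq:map} between $\cA_\Omega$ and $\cX\cdot\MSet_\Omega(\cA_\Omega)$. Applying the product rule of Section~\ref{se:bosa}, one first emits the root $o$ (contributing the factor $x$) and then draws an element of $\MSet_\Omega(\cA_\Omega)$ from its Boltzmann distribution at parameter $x$. Unfolding the multiset sampler, one draws a permutation $\sigma$ from $\bigcup_{k\in\Omega}\cS_k$ with probability proportional to $\tfrac{1}{k!}\cA_\Omega(x)^{\sigma_1}\cdots\cA_\Omega(x^k)^{\sigma_k}$, normalized by $\MSet_\Omega(\cA_\Omega)(x)$. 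Combining Proposition~\ref{pro:numset} with the functional equation \eqref{eq:relation} yields the identity $\MSet_\Omega(\cA_\Omega)(x)=\cA_\Omega(x)/x$, so after multiplying by the product-rule weight $x$ the overall normalizing factor becomes $x/\cA_\Omega(x)$, matching step 2 verbatim. For each cycle $\tau$ of length $\ell_\tau$, the multiset rule invokes an independent sampler at parameter $x^{\ell_\tau}$ and places $\ell_\tau$ identical copies of the resulting tree in the multiset, which upon joining to the new root via edges reproduces step 3.

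Granted termination, distributional correctness follows by strong induction on $|A|$: for the single-vertex tree the only generating event is $\sigma(v)\in\cS_0$, which occurs with probability $x/\cA_\Omega(x)$ as required; for $|A|\ge 2$ every subtree has strictly fewer vertices so the inductive hypothesis applies to every recursive call, and summing over the permutations $\sigma$ whose cycle decomposition can be decorated to yield the multiset $M(A)$ recovers the overall weight $x^{|A|}/\cA_\Omega(x)$, again via \eqref{eq:relation}. To address almost sure termination, the decisive input is Proposition~\ref{pro:enumeration}(i), which guarantees $\cA_\Omega(x)<\infty$ for all $0<x\le\rho_\Omega$. Summing the per-tree probabilities $x^{|A|}/\cA_\Omega(x)$ over all $A\in\cA_\Omega$ yields $\sum_n|\cA_{\Omega,n}|x^n/\cA_\Omega(x)=1$, so the probability that the sampler terminates and outputs some finite tree must equal $1$.

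The main obstacle is making the termination step logically rigorous, since the inductive argument for the output distribution implicitly presupposes that every recursive subcall terminates. I would avoid the apparent circularity by truncating the recursion at depth $d$: this deterministically produces a (possibly incomplete) object in finitely many steps, and a direct induction on $d$ shows that for any fixed $A$ the probability that the depth-$d$ truncation outputs $A$ equals $x^{|A|}/\cA_\Omega(x)$ once $d$ exceeds the height of $A$. The events $\{\text{truncation at depth }d\text{ outputs }A\}$ are monotone in $d$, so the monotone limit gives $\Pr{\Gamma\cA_\Omega(x)=A}=x^{|A|}/\cA_\Omega(x)$ unconditionally; the finiteness $\cA_\Omega(x)<\infty$ then forces the summation $\sum_A\Pr{\Gamma\cA_\Omega(x)=A}=1$, and the claim follows for all $x\in(0,\rho_\Omega]$.
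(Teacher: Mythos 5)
Your proposal is correct, and its first half is exactly the paper's route: the sampler is read off from the bijection \eqref{eq:map} via the product and multiset rules of Section~\ref{se:bosa}, with the normalization $\MSet_\Omega(\cA_\Omega)(x)=\cA_\Omega(x)/x$ coming from Proposition~\ref{pro:numset} together with \eqref{eq:relation}. Where you genuinely diverge is in the justification that the \emph{recursive} interpretation of these rules terminates almost surely and still produces the Boltzmann distribution: the paper does not prove this itself but delegates it to \cite[Thm.~4.2]{MR2498128} (and \cite[Prop.~38]{MR2810913} for the multiset step), whereas you give a self-contained argument -- truncate the recursion at depth $d$, compute $\Pr{\text{output}=A}=x^{|A|}/\cA_\Omega(x)$ for each fixed finite $A$ by induction once $d>\He(A)$, and then use $\sum_A x^{|A|}=\cA_\Omega(x)\le\cA_\Omega(\rho_\Omega)<\infty$ from Proposition~\ref{pro:enumeration}(i) to force the total mass to be $1$, which yields almost sure termination for free. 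This is sound (it is in essence the proof of the cited theorem specialised to the present class) and buys a lemma that does not lean on the Boltzmann-sampler literature; the paper's citation buys brevity and generality for arbitrary recursive specifications. Two small points to make explicit if you write this up: the induction has to be run simultaneously for all parameters $y\in(0,\rho_\Omega]$, since the recursive calls are at $x^{\ell_\tau}$ (harmless because $\rho_\Omega<1$ gives $x^{\ell_\tau}\le x\le\rho_\Omega$, and the functional equation extends to $x=\rho_\Omega$ by monotone convergence); and the per-object weight is not quite ``via \eqref{eq:relation}'' alone -- after the cancellation of the $\cA_\Omega(x^{\ell})$ factors each decorated permutation contributes $x^{|A|}/(m!\,\cA_\Omega(x))$, so you still need the elementary count that exactly $m!$ pairs (permutation of $\cS_m$, assignment of trees to its cycles) induce a prescribed $m$-element multiset, which is the Burnside/P\'olya-type fact underlying Proposition~\ref{pro:numset}; the paper leaves this to the references as well.
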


A Boltzmann-sampler for $\Gamma \tilde{\cA}(x)$ is also explicitly described in \cite[Fig.\ 14, (1)]{MR2810913}. (Note that the exposition given there contains a typo, as it corresponds to attaching only one copy of each tree $A_\tau$ in Step $3$.)

We would like to justify the above procedure by applying the rules in Section~\ref{se:bosa} for obtaining samplers for products and multisets. Indeed, the product rule states, that a sampler $\Gamma \cA_\Omega(x)$ may be obtained by taking a root-vertex $v$ (which correspons to calling $\Gamma \cX(x)$), calling the multiset sampler $\Gamma \MSet_\Omega(\cA_\Omega)(x)$, and constructing a tree by connecting $v$ with the root-vertices of the obtained trees. The rule for multiset classes yields a procedure for $\Gamma \MSet_\Omega(\cA_\Omega)(x)$ that involves calls to $\Gamma \cA_\Omega(x^k)$ for several $k \ge 1$. If we interpret these calls as independent copies of Boltzmann distributed random variables, then the rules stated in Section~\ref{se:bosa} guarantee that the resulting random P\'olya tree follows a Boltzmann distribution with parameter $x$. However, this procedure is not "explicit", as we do not specify how to obtain these copies. Hence, instead, we interpret the indepent calls $\Gamma \cA_\Omega(x)$ as {\em recursive} calls to our constructed procedure, i.e. each call corresponds to again taking a root vertex and choosing (independently) multisets from $\MSet_\Omega(\cA_\Omega)$, which again may cause further recursive calls. This is similar to a branching process. Of course, we need to justify that this recursive procedure terminates almost surely and samples according to a Boltzmann distribution with parameter $x$. This justification is given in \cite[Thm. 4.2]{MR2498128} in a more general context for classes that may be recursively specified as in~\eqref{eq:map} using operations such as products and multiset classes. 

\subsection{Deviation Inequalities}
\label{sec:deviation}
We will make use of the following moderate deviation inequality for one-dimensional random walks found in most textbooks on the subject.

\begin{lemma}
\label{le:deviation}
Let $(X_i)_{i \in \ndN}$ be family of independent copies of a real-valued random variable $X$ with  $\Ex{X} = 0$. Let $S_n = X_1 + \ldots + X_n$. Suppose that there is a $\delta >0$ such that $\Ex{e^{\theta X}} < \infty$ for $|\theta| < \delta$. Then there is a $c>0$ such that for every $1/2 < p \le 1$ there is a number $N$ such that for all $n \ge N$ and $0 < \epsilon < 1$
\[
\Pr{|S_n/n^p| \ge \epsilon } \le 2 \exp(- c \epsilon^2 n^{2p-1}).
\] 
\end{lemma}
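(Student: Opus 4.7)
The plan is to carry out a standard Cramér--Chernoff style argument based on the exponential Markov inequality combined with a local quadratic upper bound on the cumulant generating function. First I would write, for any $\theta > 0$,
\[
\Pr{S_n \ge \epsilon n^p} \le e^{-\theta \epsilon n^p} \Ex{e^{\theta S_n}} = \exp\bigl(-\theta \epsilon n^p + n \log M(\theta)\bigr),
\]
where $M(\theta) = \Ex{e^{\theta X}}$, which is finite and analytic on $(-\delta, \delta)$ by hypothesis.

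Next, since $M(0)=1$, $M'(0)=\Ex{X}=0$, and $M''(0) = \Ex{X^2} < \infty$, a Taylor expansion of $\log M$ at the origin yields constants $\theta_0 \in (0, \delta)$ and $C > 0$, both depending only on the law of $X$, such that $\log M(\theta) \le C\theta^2$ for all $|\theta| \le \theta_0$. Substituting this bound and optimizing the elementary function $\theta \mapsto -\theta \epsilon n^p + C n\theta^2$ over $\theta > 0$ suggests the choice $\theta^\star = \epsilon n^{p-1}/(2C)$, for which the exponent equals $-\epsilon^2 n^{2p-1}/(4C)$. One then takes $c = 1/(4C)$ (or any smaller positive constant), independent of $p$ and $\epsilon$.

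The remaining task is to verify that $\theta^\star$ lies in the admissible interval $[0,\theta_0]$. Since $p \le 1$ and $0 < \epsilon < 1$, one has $\theta^\star = \epsilon n^{p-1}/(2C) \le 1/(2C)$, so enlarging $C$ once and for all to ensure $1/(2C) \le \theta_0$ handles the case $p = 1$ for every $n \ge 1$; for $p < 1$ the factor $n^{p-1}$ tends to zero as $n$ grows, hence $\theta^\star \le \theta_0$ is satisfied as soon as $n$ exceeds some threshold $N = N(p)$, which is exactly the $p$-dependent $N$ allowed in the statement. The lower tail $\Pr{S_n \le -\epsilon n^p}$ is treated identically by applying the whole argument to the variables $-X_i$, whose moment generating function $\Ex{e^{-\theta X}}$ inherits the same analyticity and the same local quadratic bound; a union bound then produces the factor $2$ in front.

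To the extent that there is a sticking point in this otherwise textbook computation, it is bookkeeping that the constants $\theta_0$, $C$, and hence $c$ depend only on the distribution of $X$ and are \emph{uniform} in both $p$ and $\epsilon$; this uniformity is what permits a single constant $c > 0$ to serve across the entire range $1/2 < p \le 1$ and $0 < \epsilon < 1$, with only the threshold $N$ (needed to absorb the factor $n^{p-1}$ into $\theta_0$ when $p < 1$) varying with $p$.
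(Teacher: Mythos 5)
Your Chernoff-type argument is correct: the exponential Markov bound, the local quadratic bound $\log M(\theta)\le C\theta^2$ near the origin (valid since $M$ is finite on $(-\delta,\delta)$, $M(0)=1$, $M'(0)=0$), the choice $\theta^{\star}=\epsilon n^{p-1}/(2C)$ giving exponent $-\epsilon^2 n^{2p-1}/(4C)$, and the symmetric treatment of the lower tail all go through, with $c=1/(4C)$ uniform in $p$ and $\epsilon$ as required (in fact, once $C$ is enlarged so that $1/(2C)\le\theta_0$, the bound $\theta^{\star}\le\theta_0$ holds for every $n\ge 1$ since $n^{p-1}\le 1$, so the threshold $N$ is not even needed). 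The paper gives no proof of this lemma, merely citing it as a standard textbook fact, and your argument is exactly the standard one being referred to.
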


\section{Proof of the main theorem}\label{sec:pfmain}
{\em In the following~${\Omega}$ will always denote a set of nonnegative integers containing zero and at least one integer greater than or equal to two. Moreover,~$n$ will always denote a natural number that satisfies~$n \equiv 1 \mod \gcd({\Omega})$ and is large enough such that rooted trees with~$n$ vertices and outdegrees in~${\Omega}$ exist. 
}

\begin{proof}[Proof of Theorem~\ref{te:main}]

We begin the proof with a couple of auxiliary observations about the sampler $\Gamma \cA_{\Omega}(x)$ from Lemma~\ref{SePole:sampler}. Let us fix $x = \rho_{\Omega}$ throughout. We may do so, since by Proposition~\ref{SeFrpro:ser1} we have that $0 < \rho_{\Omega}<1$ and $\cA_{\Omega}(\rho_{\Omega}) < \infty$. 

Suppose that we modify Step 1 to "Start with a root vertex $v$. If the argument of the sampler is $\rho_{\Omega}$ (as opposed to $\rho_{\Omega}^i$ for some $i \ge 2$), then mark this vertex with the color blue.". Then the resulting tree is still Boltzmann-distributed, but comes with a colored subtree which we denote by $\cT$.

Note that~$\cT$ is distributed like a Galton-Watson tree without the ordering on the offspring sets. By construction, the offspring distribution~$\xi$ of~$\cT$ is given by the number of fixpoints of the random permutation drawn in Step 2. Thus, the probability generating function of~$\xi$ is 
\begin{align}
\label{SePoeq:a}
	\Ex{z^\xi}
	=
	\frac{\rho_{\Omega}}{\cA_{\Omega}(\rho_{\Omega})} Z_{{{\Omega}}}(z \cA_{\Omega}(\rho_{\Omega}), \cA_{\Omega}(\rho_{\Omega}^2), \cA_{\Omega}(\rho_{\Omega}^3), \ldots).
\end{align}
Moreover, for any blue vertex~$v$ we may consider the forest~$F(v)$ of the trees dangling from~$v$ that correspond to cycles of the permutation~$\sigma(v)$ with length at least two. Let~$\zeta$ denote a random variable that is distributed like the number of vertices~$|F(v)|$ in~$F(v)$. Then the probability generating function of~$\zeta$ is 
\begin{align}
\label{SePoeq:b}
	\Ex{z^{\zeta}}
	=
	\frac{\rho_{\Omega}}{\cA_{\Omega}(\rho_{\Omega})}
	Z_{{{\Omega}}}(\cA_{\Omega}(\rho_{\Omega}), \cA_{\Omega}((z\rho_{\Omega})^2), \cA_{\Omega}((z\rho_{\Omega})^3),\ldots).
\end{align}
Using Proposition~\ref{SeFrpro:cnt} it follows that the generating functions $\Ex{z^{\xi}}$ and $\Ex{z^{\zeta}}$ have radius of convergence strictly larger than one.
Hence~$\xi$ and~$\zeta$ have finite exponential moments. In particular, there are constants~$c,c' > 0$ such that for any~$s \ge 0$
\begin{equation}
\label{SePoeq:expTailBound}
\Pr{\xi \ge s} \le c e^{-c's} \quad \text{and} \quad \Pr{\zeta \ge s} \le c e^{-c's}.
\end{equation}
Moreover, as we argue below,~$\xi$ has average value
\begin{align*}
\label{SePoeq:c}
\Ex{\xi} = \left(\frac{\partial}{\partial s_1} Z_{{{\Omega}}}\right)(\cA_{\Omega}(\rho_{\Omega}), \cA_{\Omega}(\rho_{\Omega}^2), \ldots) \rho_{\Omega} = 1.
\end{align*}
This can be shown as follows. Recall that the ordinary generating series satisfies the identity~$\cA_{\Omega}(z) = E(z,\cA_{\Omega}(z))$ with the series~$E(z,w)$ given by
\[
E(z,w) = z Z_{{{\Omega}}}(w, \cA_{\Omega}(z^2), \cA_{\Omega}(z^3), \ldots).
\]
In particular, we have that~$F(z, \cA_{\Omega}(z)) = 0$ with~$F(z,w) = E(z,w) - w$. Suppose that $(\frac{\partial}{\partial w} F)(\rho, \cA_{\Omega}(\rho)) \ne 0$. Then by the implicit function theorem the function~$\cA_{\Omega}(z)$ has an analytic continuation in a neighbourhood of~$\rho_{\Omega}$. But this contradicts Pringsheim's theorem \cite[Thm. IV.6]{MR2483235}, which states that the series~$\cA_{\Omega}(z)$ must have a singularity at the point~$\rho_{\Omega}$ since all its coefficients are nonnegative real numbers. Hence we have~$(\frac{\partial}{\partial w} F)(\rho, \cA_{\Omega}(\rho)) = 0$ which is equivalent to~$\Ex{\xi}=1$.

With all these facts at hand we proceed with the proof of the theorem. Slightly abusing notation, we let $\mA_n$ denote the colored random tree drawn by conditioning the (modified) sampler $\Gamma \cA_{\Omega}(\rho_{\Omega})$ on having exactly~$n$ vertices. That is, if we ignore the colors, $\mA_n$ is drawn uniformly among all P\'olya trees of size $n$ with outdegrees in~${\Omega}$. Moreover, let $\cT_n$ denote the colored subtree of $\mA_n$, and for any vertex $v$ of $\cT_n$ let $F_n(v)$ denote the corresponding forest that consists of non-blue vertices.  We will argue that with high probability there is a constant $C > 0$ such that $|F_n(v)| \le C\log n$ for all $v \in \cT_n$. Indeed, note that by Proposition~\ref{SeFrpro:cnt},
\begin{equation}
\label{SePoeq:poly}
	\Pr{|\Gamma \cA_{\Omega}(\rho_{\Omega})| = n} = \frac{\rho_{\Omega}^n}{\cA_{\Omega}(\rho_{\Omega})}[z^n] \cA_{\Omega}(\rho_{\Omega}) = \Theta(n^{-3/2}),
\end{equation}
i.e. the probability is (only) polynomially small. Thus, for any~$s \ge 0$, if we denote by~$\zeta_1, \zeta_2, \dots$ independent random variables that are distributed like~$\zeta$
\[
\begin{split}
	\Pr{\exists v \in \cT_n: |F_n(v)| \ge s}
	& = \Pr{\exists v \in \cT: |F(v)| \ge s \mid |\Gamma \cA_{\Omega}(\rho_{\Omega})| = n} \\
	& \le O(n^{3/2}) \Pr{\exists 1 \le i \le n: \zeta_i \ge s}.
\end{split}
\]
Using~\eqref{SePoeq:expTailBound} and setting~$s = C\log n$ we get that~$\Pr{\zeta_i \ge s} = o(n^{-5/2})$ for an appropriate choice of~$C > 0$. Thus, by the union bound
\begin{equation}
\label{SePoeq:smallDangling}
	\Pr{\forall v \in \cT_n: |F_n(v)| \le C\log n} = 1 - o(1).
\end{equation}
\noindent
\begin{minipage}{.55\textwidth}
The typical shape of~$\mA_n$ thus consists of a colored tree with small forests attached to each of its vertices, compare with Figure \ref{SePofi:core}.  In particular, we have that the Gromov-Hausdorff distance between the rescaled trees~$\mA_n/\sqrt{n}$ and~$\cT_n/\sqrt{n}$ converges in probability to zero. We are going to show that there is a constant~$c_{\Omega}>0$ such that~$c_{\Omega} \cT_n/\sqrt{n}$ converges weakly towards the Brownian continuum random tree~$\CRT$. This immediately implies that~$$c_{\Omega} \mA_n/\sqrt{n} \convdis \CRT$$ and we are done.
\end{minipage}%
\begin{minipage}{.45\textwidth}
\centering
\includegraphics[width=0.53\textwidth]{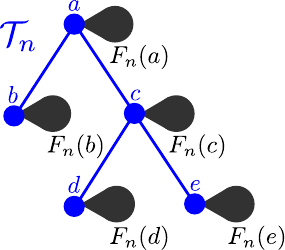}
\captionof{figure}{\small The typical shape of the random P\'olya tree with~$n$ vertices.}
\label{SePofi:core}
\end{minipage}

~\\

We are going to argue that the number of vertices in~$\cT_n$ concentrates around a constant multiple of~$n$. More precisely, we are going to show that for any exponent~$0<s<1/2$ we have with high probability that
\begin{align}
\label{SePoeq:d}
|\cT_n| \in (1 \pm n^{-s})\frac{n}{1 + \Ex{\zeta}}.
\end{align}
To this end, consider the corresponding complementary event in the unconditioned setting
\[
|\cT| \notin (1 \pm n^{-s})\frac{|\Gamma \cA_{\Omega}(\rho_{\Omega})|}{1 + \Ex{\zeta}}.
\]
If this occurs, then we clearly also have that
\[
\sum_{v \in \cT}(1 + |F(v)|) = |\Gamma \cA_{\Omega}(\rho_{\Omega})| \notin (1 \pm \Theta(n^{-s})) (1 + \Ex{\zeta}) |\cT|.
\]
Let~$\cE$ denote the corresponding event. From ~\eqref{SePoeq:smallDangling} we know that with high probability~$|F_n(v)| = O(\log n)$ for all vertices~$v$ of~$\cT_n$. Hence, with high probability, say,~$|\cT_n| \ge n/\log^2n$. Using again~\eqref{SePoeq:poly}
\[
\Pr{\cE \mid |\Gamma \cA_{\Omega}(\rho_{\Omega})|=n} = O(n^{3/2}) \Pr{  \frac{n}{\log^2n} \le |\cT| \le n, \cE} + o(1).
\]
By applying the union bound, the latter probability is at most
\[
\sum_{{n}/{\log^2n} \le \ell \le n} \Pr{ \sum_{i=1}^\ell (1 + \zeta_i) \notin (1 \pm \Theta(n^{-s}))  (1 + \Ex{\zeta})\ell}.
\]
Since the random variable~$\zeta$ has finite exponential moments, we may apply the deviation inequality in Lemma~\ref{le:deviation} in order to bound this by o(1).
Hence, \eqref{SePoeq:d} holds with probability tending to $1$ as $n$ becomes large. We are now going to prove that
\begin{align}
\label{SePoeq:e}
\frac{\sqrt{(1 + \Ex{\zeta})} \sigma}{2 \sqrt{n}}\cT_n \convdis \CRT 
\end{align}
with~$\sigma^2 > 0$ denoting the variance of the random variable~$\xi$. This implies that
\begin{equation}
\label{SePoeq:cOmega}
	c_{\Omega} \mA_n /\sqrt{n} \convdis \CRT
	\quad \text{ with } \quad
	c_{\Omega} = \frac{\sqrt{(1 + \Ex{\zeta})} \sigma}{2}
\end{equation}
and we are done. Note that~$\sigma$ and~$\Ex{\zeta}$ may be computed explicitly from the expression of the probability generating functions in~(\ref{SePoeq:a}) and (\ref{SePoeq:b}). We obtain that $\sigma^2$ is given by
\begin{align*}
	\sigma^2 &=  \left (\frac{\partial^2}{\partial z^2} \Ex{z^\xi} + \frac{\partial}{\partial z}\Ex{z^\xi}  - (\frac{\partial}{\partial z}\Ex{z^\xi})^2\right)(1) \\ 
	= &\quad{\rho_{\Omega}}{\cA_{\Omega}(\rho_{\Omega})}\frac{\partial^2 Z_{{{\Omega}}}}{\partial s_1^2}(\cA_{\Omega}(\rho_{\Omega}), \cA_{\Omega}(\rho_{\Omega}^2),\dots) \\ 
	&+ {\rho_{\Omega}}\frac{\partial Z_{{{\Omega}}}}{\partial s_1}(\cA_{\Omega}(\rho_{\Omega}), \cA_{\Omega}(\rho_{\Omega}^2),\dots) \\ 
	&- \rho_{\Omega}^2\left(\frac{\partial Z_{{{\Omega}}}}{\partial s_1} (\cA_{\Omega}(\rho_{\Omega}), \cA_{\Omega}(\rho_{\Omega}^2),\dots) \right)^2.
\end{align*}
Note that $\sigma > 0$, as $\xi$ is not constant. Moreover,
\begin{align*}
	\Ex{\zeta}
	&= \left(\frac{\partial}{\partial z}\Ex{z^\zeta}\right)(1) 
	= \frac{\rho_{\Omega}}{\cA_{\Omega}(\rho_{\Omega})}\sum_{i \ge 2}
		\left(\frac{\partial}{\partial s_i}Z_{{{\Omega}}}\right)( \cA_{\Omega}(\rho_{\Omega}),  \cA_{\Omega}(\rho_{\Omega}^2), \dots) \, i \rho_{\Omega}^i  \cA_{\Omega}'(\rho_{\Omega}^i),
\end{align*}
where $\cA_{\Omega}'(z) = \frac{\partial}{\partial z}\cA_{\Omega}(z)$. Note that this expression is well-defined, since $0 < \rho_{\Omega} < 1$.

In order to show~\eqref{SePoeq:cOmega}, let~$f: \ndK \to \ndR$ denote a bounded, Lipschitz-continous function defined on the space~$\ndK$ of isometry classes of compact metric spaces. Note that the tree~$\cT_n$ conditioned on having~$\ell$ vertices is distributed like the tree~$\cT$ conditioned on having~$\ell$ vertices. In particular, it is identically distributed to a~$\xi$-Galton-Watson tree~$\cT^\xi$ conditioned on having~$\ell$ vertices, which we denote by~$\cT^\xi_\ell$. Since (\ref{SePoeq:d}) holds with high probability it follows that
\[
\Ex{f({c_{\Omega} \cT_n}/{ \sqrt{n}})} = o(1) + \sum_{\ell \in (1 \pm n^{-s})\frac{n}{1 + \Ex{\zeta}}} \Ex{f({c_{\Omega} \cT^\xi_\ell}/{\sqrt{n}})} \Pr{|\cT| = \ell}.
\]
Let $\Di(T)$ denote the diameter of $T$, i.e., the number of vertices on a longest path in $T$. Since~$f$ was assumed to be Lipschitz-continuous it follows that 
\[
	\left|\Ex{f({c_{\Omega} \cT^\xi_\ell}/{\sqrt{n}})} - \Ex{f({\sigma \cT^\xi_\ell}/{2 \sqrt{\ell}})}\right|
	\le
	a_{n,\ell} \Ex{\Di(\cT^\xi_\ell)/\sqrt{\ell}}
\]
for a sequence~$a_{n,\ell}$ with~$\sup_\ell(a_{n,\ell}) \to 0$ as~$n$ becomes large. Moreover, the average rescaled diameter~$\Ex{\Di(\cT^\xi_\ell)/\sqrt{\ell}}$ converges to a multiple of the expected diameter of the CRT~$\CRT$ as~$\ell$ tends to infinity, see e.g.~\cite{MR3077536}. In particular, it is a bounded sequence. Since \[\Ex{f({\sigma \cT^\xi_\ell}/{2 \sqrt{\ell}})} \to \Ex{f(\CRT)}\] as~$\ell \to \infty$, it follows that \[\Ex{f({c_{\Omega} \cT_n}/{ \sqrt{n}})} \to \Ex{f(\CRT)}\] as~$n$ becomes large. This completes the proof.
\end{proof}

\begin{proof}[Proof of Theorem \ref{te:main2}]
We are going to use the notation of the previous proof. Let $x \ge 0$ be given. Without loss of generality, we may assume throughout that $x \ge \sqrt{n}$.  If the height $\He(\mA_n)$ of the tree $\mA_n$ satisfies $\He(\mA_n) \ge x$, then $\He(\cT_n) \ge x/2$ or $|F_n(v)| \ge x/2$ for at least one vertex $v \in \cT_n$. We are going to bound the probability for each of these events separately. By the tail bounds for conditioned Galton-Watson trees in Theorem~\ref{te:theoconv} there exist constants $C_1, c_1 > 0$ such that for all $\ell$ and $y\ge0$ we have that
\[
\Pr{\He(\cT) \ge y \mid |\cT| = \ell} \le C_1 \exp(-c_1 y^2/\ell).
\]
Moreover, $\cT_n$ conditioned on having size $\ell$ is distributed like $\cT$ conditioned on having size $\ell$. Hence
\begin{align}
	\label{eq:tail1}
\Pr{\He(\cT_n) \ge x/2} = \sum_{\ell=1}^n \Pr{|\cT_n|=\ell} \, \Pr{\He(\cT) \ge x/2 \mid |\cT| = \ell} \le C_1 \exp(-c_1 x^2/(4n)).
\end{align}
By~\eqref{SePoeq:poly} it holds that
\begin{align*}
\Pr{\max_{v \in \cT_n} |F_n(v)| \ge x/2} &\le O(n^{3/2}) \Pr{\max_{v \in \cT} |F(v)| \ge x/2, |\Gamma \cA_\Omega(\rho_\Omega)| = n} \\
&\le O(n^{5/2}) \Pr{\zeta \ge x/2}.
\end{align*}
As we assumed that $x \ge \sqrt{n}$, it follows by \eqref{SePoeq:poly} that there are constants $c,c',c''>0$ with
\[
n^{5/2} \Pr{\zeta \ge x/2} \le c \exp((5/2) \log n- c' x /2) \le c \exp(-c'' x).
\]
Consequently, there are constants $C_2, c_2 >0$ with 
\begin{align}
	\label{eq:tail2}
\Pr{\max_{v \in \cT_n} |F_n(v)| \ge x/2} \le C_2 \exp(-c_2 x^2/n).
\end{align}
Combining Inequalities \eqref{eq:tail1} and \eqref{eq:tail2} yields
\[
\Pr{\He(\mA_n) \ge x} \le C_1 \exp(-c_1 x^2/(4n)) + C_2 \exp(-c_2 x^2/n) \le C_3 \exp(-c_3 x^2/n)
\]
for some constants $C_3,c_3>0$. This concludes the proof of the tail-bound for the height.

In order to show the tail-bound for the width we begin with some auxiliary observations. Note that~\eqref{SePoeq:expTailBound} guarantees that there are $c \in (0,1)$ and $c' > 0$ such that for any $t,t' \in \mathbb{N}$
\[
	\Pr{\zeta \mathbf{1}(\zeta \ge t) \ge t'} \le c' c^{t'}
	\quad \text{ and } \quad
	\mathbb{E}\big[\zeta \mathbf{1}(\zeta \ge t) \big] \le c' t c^t,
\]
where $\mathbf{1}(E)$ denotes the indicator function for the event $E$. Let $\zeta_1, \zeta_2, \dots$ be independent random variables with the same distribution as $\zeta$ and define 
\begin{equation}
\label{eq:deftildezeta}
	\tilde \zeta = \sum_{i \ge 1} \zeta_i \mathbf{1}(\zeta \ge i).
\end{equation}
The previous observation implies that there is a $C > 0$ such that $\mathbb{E}[\tilde \zeta] \le C$ and $\tilde\zeta$ is finite almost surely.  Let $p_i = \Pr{\zeta < i}$ and for a  series $F(z)$ write $F^{\ge i}(z) = \sum_{j \ge i} [u^j]F(u) z^j$. Setting $F(z) = \mathbb{E}[z^\zeta]$, from~\eqref{SePoeq:b} we get that
\[
	\mathbb{E}[z^{\tilde \zeta}] = \prod_{i \ge 1} \left(p_i + F^{\ge i}(z) \right) = \prod_{i \ge 1} \left(1 + (p_i-1) + F^{\ge i}(z) \right).
\]
Note that for $z \ge 1$ we have that $(p_i-1) + F^{\ge i}(z) \ge 0$. Our assumption asserts that there is a $\rho > 1$ such that the radius of convergence of all $F^{\ge i}$'s equals $\rho$. Then, for any $z_0 \in (1, \rho)$ we have for some $A > 0$ and $a \in (0,1)$ that $F^{\ge i}(z_0) \le A a^i$ for all $i \in \mathbb{N}_0$. Thus
\[
	\sum_{i \ge 1} |(p_i-1) + F^{\ge i}(z_0)|
	\le \mathbb{E}[\zeta] + \sum_{i \ge 1}F^{\ge i}(z_0)
	< \infty.
\]
Since $\mathbb{E}[z^{\tilde \zeta}]$ has only non-negative coefficients, it follows that the radius of convergence of $\mathbb{E}[z^{\tilde \zeta}]$ is larger than one, and thus $\tilde \zeta$ has exponential moments as well. We thus infer that we even may choose $c \in (0,1)$ and $c' > 0$ such that for any $t' \in \mathbb{N}$
\begin{equation}
\label{eq:tailboundtildezeta}
	\Pr{\tilde \zeta \ge t'} \le c' c^{t'}.
\end{equation}
With all these facts at hand we proceed with the proof for the tail bound of the width. With foresight, set $\alpha = 1 + \mathbb{E}[\tilde \zeta]/2 > 0$. First, note that with Theorem~\ref{te:theoconv}, since $\cT_n$ is a conditioned $\xi$-Galton Watson tree with at most $n$ vertices
\begin{equation}
\label{eq:prwfirst}
	\Pr{\Wi(\mA_n) \ge x}
	\le \Pr{\Wi(\mA_n) \ge x \text{ and } \Wi(\cT_n) \le x/\alpha} + C_4\exp(-c_4x^2/n)
\end{equation}
for some $c_4, C_4 > 0$; here and in the sequel we identify $\mA_n$ with  $\Gamma \cA_\Omega(\rho_\Omega)$ conditional on $|\Gamma \cA_\Omega(\rho_\Omega)| = n$, and $\cT_n$ is its colored subtree. Let $L_d$ be the set of vertices in $\cT_n$ with distance $d$ from the root, and define $W_d = |L_d|$. Conditional on ``$\Wi(\cT_n) \le x/\alpha$'' we have that $W_i \le x/\alpha$ for all $1 \le i \le n$ (and, of course, $W_i = 0$ for all other $n$). Now note that the number of vertices at distance $d$ from the root of $\mA_n$ is bounded from above by
\[
	U_d := W_d + \sum_{i = 0}^{d-1} \sum_{v \in L_i} |F_n(v)| \mathbf{1}(|F_n(v)| \ge d-i),
\]
since a forest $F_n(v)$ with $v \in L_i$ cannot contribute to the set of depth $d$ vertices in $\mA_n$ unless it has at least that $d-i$ vertices. Thus
\[
	\Pr{\Wi(\mA_n) \ge x \text{ and } \Wi(\cT_n) \le x/\alpha}
	\le \Pr{\exists 1 \le d \le n: U_d \ge x \text{ and } \Wi(\cT_n) \le x/\alpha}.
\]
Using the union bound, ~\eqref{SePoeq:poly} and the definition in~\eqref{eq:deftildezeta}, we infer that there is a $C_5 > 0$ such that the latter probability is bounded by
\[
	C_5n^{5/2} \Pr{\sum_{i = 1}^{x/\alpha} \tilde \zeta_i \ge (1 - 1/\alpha)x}, \quad\text{where } \tilde \zeta_1, \tilde \zeta_2, \dots \text{ are iid with the same distribution as } \tilde\zeta.
\]
Note that with the choice of $\alpha$ we have that the expectation of the sum equals $(1 - 1/\alpha)x/2$. Since $\tilde \zeta$ has exponential moments, see~\eqref{eq:tailboundtildezeta}, by applying Lemma~\ref{le:deviation} we infer that there are constants $c_6, C_6 > 0$ such that
\[
	\Pr{\exists 1 \le d \le n: U_d \ge x \text{ and } \Wi(\cT_n) \le x/\alpha}
	\le C_6 e^{-c_6 x},
\]
and the proof is completed with~\eqref{eq:prwfirst}.
\end{proof}

\bibliographystyle{plain}
\bibliography{polya}

\end{document}